\documentclass{article}
\title{\textbf{The domination number of the $k$-token graph of the complete bipartite graph $K_{m,n}$}}
\author{Nathaniel Chinman, Antonio Cortijo, Beatrice Pedroni, Ian Seong, Lucy Stenger}
\date{}
\usepackage {parskip}
\usepackage [margin=2.5cm] {geometry}
\usepackage{enumitem}
\usepackage[utf8]{inputenc}
\usepackage[T1]{fontenc}
\usepackage{geometry}
\usepackage{amsmath, amssymb, amsthm}
\usepackage{hyperref}
\usepackage{graphicx}
\usepackage{upgreek}
\usepackage{dsfont}
\usepackage{mathtools}
\usepackage{relsize}
\usepackage{bm}
\usepackage{esvect}
\usepackage{circuitikz}
\usepackage{tikz}
\usepackage{tikz-cd}
\usetikzlibrary{positioning}
\usepackage{hhline}
\usepackage{enumitem}
\usepackage{comment}
\usepackage{ytableau}
\usepackage{pdflscape}
\usepackage{mathabx}
\usepackage{multicol}
\usepackage{algorithm2e}
\usepackage{algpseudocode}

\RestyleAlgo{ruled}
\newtheorem{theorem}{Theorem}[section]
\newtheorem{corollary}[theorem]{Corollary}
\newtheorem{lemma}[theorem]{Lemma}

\theoremstyle{definition}
\newtheorem{definition}[theorem]{Definition}

\newtheorem{remark}[theorem]{Remark}

\begin{document}
\maketitle
\begin{abstract}
This paper is about token graphs and domination number. Let $G=(X,E)$ denote an undirected graph that is connected, with no loops or multiple edges, vertex set $X$, and edge set $E$. Pick a positive integer $k\leq \vert X\vert$. The $k$-token graph of $G$, denoted by $F_k(G)$, is defined as follows. The vertex set $\mathcal{X}$ consists of all $k$-subsets of $X$. Two vertices $x,y\in \mathcal{X}$ are adjacent whenever the symmetric difference of $x$ and $y$ consists of two vertices that form an edge in $E$. In this paper, we compute the exact value for the domination number of $F_2(K_{m,n})$ where $K_{m,n}$ is the complete bipartite graph with parameters $m,n$. We present some bounds for the domination number of $F_k(K_{m,n})$ when $k\geq 3$. \\ \\
\noindent
     \textbf{Keywords.} Token graph; domination number; independence number.\\
    \textbf{2020 Mathematics Subject Classification.} Primary: 05C69. Secondary: 05C35, 05C76.
\end{abstract}

\section{Introduction}
\label{intro}
This paper is about token graphs and domination number. To set the stage, we recall the definition of a token graph. Let $G=(X,E)$ denote an undirected graph that is connected, with no loops or multiple edges, vertex set $X$, and edge set $E$. Pick a positive integer $k\leq \vert X\vert$. The $k$-token graph of $G$, denoted by $F_k(G)$, is defined as follows. The vertex set $\mathcal{X}$ consists of all $k$-subsets of $X$. Two vertices $x,y\in \mathcal{X}$ are adjacent whenever the symmetric difference of $x$ and $y$ consists of two vertices that form an edge in $E$. Token graphs were first introduced in \cite{FabFlo} to study multiple objects on the vertices of a graph that move under certain conditions. 

The domination number of $G$, denoted by $\gamma(G)$, is the minimum cardinality of a set $Y\subseteq X$ such that every vertex in $X$ is either in $Y$ or adjacent to some vertex in $Y$. The concept of domination number was first introduced in \cite{Berge}, and the term was first coined in \cite{Ore}. In \cite{FabGom}, the authors found the domination number of the token graph $F_2(G)$ when $G$ is a star graph or a complete graph. For such $G$, the authors also presented bounds for the domination number of the token graph $F_k(G)$ when $k\geq 3$. Token graphs of complete graphs are more commonly known as the Johnson graphs \cite[Section~9.2]{BCN}.

In this paper, we compute the exact value for the domination number of $F_2(K_{m,n})$ where $K_{m,n}$ is the complete bipartite graph with parameters $m,n$. The graph $K_{1,n}$ is a star graph, which is heavily discussed in \cite{FabGom}, so we assume, without loss, that $2\leq m\leq n$. We also present some bounds for the domination number of $F_k(K_{m,n})$ when $k\geq 3$. We now summarize the main results of the paper. For the domination number of $F_2(K_{m,n})$, consider the four cases below:
\begin{enumerate}[label=(C\arabic*)]
    \item $m$ is even;
    \item $m=3$;
    \item $m=n-1\geq 5$ for odd $m$;
    \item $m,n$ satisfy none of (C1)--(C3).
\end{enumerate}

\begin{theorem}
    \label{main1}
    Let $2\leq m\leq n$. Referring to the cases (C1)--(C4) above, the domination number of $F_2(K_{m,n})$ is
    \begin{equation}
        \label{intro1}
        \gamma(F_2(K_{m,n}))=\begin{cases}
            \mathlarger{\frac{m}{2}+n-1}&\text{if $m,n$ satisfy (C1),}\\ \\
            \mathlarger{\frac{m-1}{2}+n-1}&\text{if $m,n$ satisfy one of (C2), (C3)}\\
            \\
            \mathlarger{\frac{m-1}{2}+n}&\text{if $m,n$ satisfy (C4).}
        \end{cases}
    \end{equation}
\end{theorem}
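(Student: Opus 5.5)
We would first record the structure of $F_2(K_{m,n})$. Write the bipartition of $K_{m,n}$ as $A\cup B$ with $|A|=m$, $|B|=n$. The vertices of $F_2(K_{m,n})$ are of three types: pairs $\{a,a'\}\subseteq A$ (type $AA$), pairs $\{b,b'\}\subseteq B$ (type $BB$), and mixed pairs $\{a,b\}$ (type $AB$).

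Since a single token must move along an edge of $K_{m,n}$, the adjacencies are as follows. A mixed pair $\{a,b\}$ is adjacent exactly to the $AA$ pairs containing $a$ and to the $BB$ pairs containing $b$. Hence $F_2(K_{m,n})$ is bipartite, with the $AB$ vertices on one side and the $AA\cup BB$ vertices on the other.

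Given a dominating set $D$, write $D=D_1\cup D_2\cup D_3$ with $D_1$ the mixed pairs, $D_2$ the $AA$ pairs and $D_3$ the $BB$ pairs. Let $S_A,S_B$ be the sets of $A$- and $B$-vertices occurring in $D_1$, and let $T_A\subseteq A$, $T_B\subseteq B$ be the vertices covered by the pairs in $D_2$, $D_3$. Domination then translates into three conditions:
\begin{enumerate}[label=(\roman*)]
    \item every pair inside $A\setminus S_A$ lies in $D_2$;
    \item every pair inside $B\setminus S_B$ lies in $D_3$;
    \item every mixed pair $\{a,b\}$ with $a\notin T_A$ and $b\notin T_B$ lies in $D_1$.
\end{enumerate}
This reduces the problem to a purely combinatorial optimization over $(D_1,D_2,D_3)$.

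For the upper bounds we give explicit constructions.
\begin{itemize}
    \item In case (C1), take a perfect matching of $A$ as $D_2$, so $T_A=A$ and (iii) is vacuous. Then take $n-1$ mixed pairs $\{a_i,b_i\}$ covering all of $B$ except one vertex, so that $|B\setminus S_B|=1$ and (ii) is vacuous. Here $S_A$ is automatically all of $A$ once $n-1\ge m$; if not, we reuse $A$-vertices or add them. This gives $m/2+n-1$.
    \item For odd $m$, take a matching of $A$ missing one vertex $a_0$. The mixed pairs $\{a_0,b\}$ must then be handled through $T_B$ or $D_1$. The generic construction uses $n$ mixed pairs $\{a_0,b\}$ (these also give $S_B=B$) together with the $(m-1)/2$ matching edges, for a total of $(m-1)/2+n$.
    \item The special cases save one vertex by exploiting (i) and (ii) with small leftover cliques. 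For $m=3$, one $AA$ pair and $n-1$ mixed pairs should work, with a careful choice of which $B$-vertex is left out. For $m=n-1$, we would try leaving $A\setminus S_A$ and $B\setminus S_B$ small and covering $a_0$'s mixed pairs by a single $BB$ pair. We expect to find these constructions by small-case experimentation ($m=3$; $m=5,n=6$).
\end{itemize}

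For the lower bound we set $p=|D_1|$, $q=|D_2|$, $r=|D_3|$ and split on $|T_A|$.
\begin{itemize}
    \item If $T_A\ne A$, pick $a\notin T_A$. Then (iii) forces $\{a,b\}\in D_1$ for every $b\notin T_B$. Together with $|T_B|\le 2r$, this gives $p+r\ge n-r+r$, roughly $p+r\ge n-\tfrac{|T_B|}{2}\cdots$. This has to be combined with (i) and the bound $q\ge\lceil|T_A|/2\rceil$.
    \item If $T_A=A$, then $q\ge\lceil m/2\rceil$. Also (ii) forces $p+r\ge n-1$, because $\binom{s}{2}\ge s-1$ and each mixed pair contributes at most one vertex to $S_B$. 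This already yields the (C1) bound.
\end{itemize}

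In the odd case the crude count gives only $(m-1)/2+n-1$, so the real work is an exchange/parity argument showing that equality forces a rigid structure. We expect that structure to be: $D_2$ a near-perfect matching of $A$; $|B\setminus S_B|\le 1$; and the uncovered vertex $a_0$ handled by mixed pairs or by $T_B$. We would then show this structure is realizable only when $m=3$ or $m=n-1$. The main obstacle is exactly this step: the equality analysis for odd $m\ge5$ with $m\ne n-1$. Our approach would be to assume $|D|=(m-1)/2+n-1$ and use tightness in each of the inequalities above, forcing $p\ge m$ (to have $S_A$ large) and $p\le n-1$. We would then show that the $B$-vertex missing from $S_B$ and the vertex $a_0$ produce an undominated mixed pair unless $m+1=n$, with $m=3$ escaping because then $\binom{|A\setminus S_A|}{2}$ can be absorbed by the single $AA$ pair.
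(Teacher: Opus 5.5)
The reduction to conditions (i)--(iii) is correct; it is Lemma~\ref{important} together with the observation in the proof of Lemma~\ref{C2lem}. Your (C1) construction and your lower bound $\lceil m/2\rceil+n-1$ in the case $T_A=A$ are also correct. However, two of your constructions fail and a third is missing.

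For odd $m\ge5$, take $D_1=\{\{a_0,b\}: b\in B\}$ and $D_2$ a matching of $A\setminus\{a_0\}$. Then $S_A=\{a_0\}$, so (i) requires all $\binom{m-1}{2}$ pairs inside $A\setminus\{a_0\}$ to lie in $D$. For example, $\{a_1,a_3\}$ with $a_1,a_3$ in different matching edges has no neighbour in $D$. What works for (C4) is your own (C1) construction with $(m+1)/2$ $AA$ pairs covering $A$, as in (\ref{dom5n1}), (\ref{dom5n2}).

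For $m=3$, one $AA$ pair plus $n-1$ mixed pairs cannot dominate. With $D_3=\emptyset$ you have $T_B=\emptyset$, so (iii) forces all $n$ mixed pairs through the $A$-vertex outside the $AA$ pair. You need $\{a_2,a_3\}$, $\{b_{n-1},b_n\}$ and $\{a_1,b_j\}$ for $j\le n-2$, as in (\ref{dom3n}).

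For (C3) you give no construction, and the optimum is not of the shape you suggest. It has no $AA$ pair at all: it is a perfect matching of $B$ (so $T_B=B$) together with $\{a_i,b_i\}$ for $i\le m-1$, as in (\ref{dommm+1}).

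The real gap is the lower bound. For $T_A=A$ your count already gives $(m+1)/2+n-1=(m-1)/2+n$ when $m$ is odd, so no exchange or parity argument is needed there. All the difficulty is in $T_A\ne A$, and there you have no bound. The inequality you write for that case is garbled: $|D_1|+|D_3|\ge n$ is false in general, since the (C2) optimum has $|D_1|+|D_3|=n-1$. Condition (iii) alone gives only $|D_1|\ge n-|T_B|$. Even the (C1) lower bound is proved only for $T_A=A$. Two ingredients are missing.

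First, the case $T_B=B$ must be treated on its own; it is not symmetric to $T_A=A$ because $m\le n$. There $|D_3|\ge\lceil n/2\rceil$, $|D_1|+|D_2|\ge m-1$ and $|D_1|+|D_3|\ge n-2+\lceil n/2\rceil$, so $|D|\ge\max(m-1,n-2)+\lceil n/2\rceil$. This is at least $\lceil m/2\rceil+n-1$ except when $m$ is odd and $n=m+1$, where it equals $(3m-1)/2$ and is attained. That is the source of (C3). It also contradicts the rigid structure you predict ($D_2$ a near-perfect matching of $A$, $|B\setminus S_B|\le1$). That structure fails for the (C2) optimum as well, which has $|B\setminus S_B|=2$.

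Second, when $T_A\ne A$ and $T_B\ne B$, put $u=|A\setminus T_A|$ and $v=|B\setminus T_B|$. The mixed pairs forced by (iii) form the block $(A\setminus T_A)\times(B\setminus T_B)$, which meets only $u$ vertices of $A$ and $v$ of $B$. Hence $|D_1|\ge uv+|S_A|-u$ and $|D_1|\ge uv+|S_B|-v$; this is the role of the term $pr$ in Lemma~\ref{C1lem}. Combined with $|D_2|\ge\binom{|A\setminus S_A|}{2}$ and $|D_3|\ge\binom{|B\setminus S_B|}{2}$ from (i) and (ii), this concentration is exactly what lets $m=3$ save a vertex. For $u=1$ and $|A\setminus S_A|=2$ the extra cost $|S_A|-u$ equals $m-3$, which vanishes only for $m=3$. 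The same concentration blocks the saving for odd $m\ge5$ with $n\ne m+1$.

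Without these two ingredients, and the case and equality analysis they lead to (which the paper carries out through $p,r,s,t$ and Lemma~\ref{important}), your lower bounds are established only in the case $T_A=A$.
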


\begin{theorem}
    \label{main2}
    Let $2\leq m\leq n$. Pick an integer $k\geq 3$ such that  $m+n\geq 2k$. The domination number of $F_k(K_{m,n})$ satisfies
    \begin{equation}
    \label{intro2}
        \frac{1}{1+mn} \binom{m+n}{k} \leq \gamma(F_k(K_{m,n}))\leq \frac{n}{m+n-k+1}\binom{m+n}{k}.
    \end{equation}
\end{theorem}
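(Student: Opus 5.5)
For the lower bound we use the standard estimate $\gamma(H)\ge |V(H)|/(\Delta(H)+1)$: every vertex dominates at most itself and its $\Delta(H)$ neighbours. Here $|V(F_k(K_{m,n}))|=\binom{m+n}{k}$, so it suffices to show $\Delta(F_k(K_{m,n}))\le mn$. Write $X=A\cup B$ with $|A|=m$, $|B|=n$. A $k$-set $S$ is adjacent to $S\setminus\{u\}\cup\{w\}$ exactly when $uw\in E$, $u\in S$ and $w\notin S$. So the neighbours of $S$ are in bijection with the edges of $K_{m,n}$ having exactly one endpoint in $S$. These form a subset of the $mn$ edges, hence $\deg S\le mn$. (Explicitly, if $|S\cap A|=a$ then $\deg S=a(n-k+a)+(k-a)(m-a)$.) This gives the left inequality.

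For the upper bound we exhibit an explicit dominating set. Fix $a\in A$ and $b\in B$, and let
\[
\mathcal D=\{S: a\in S\}\ \cup\ \{S: b\in S,\ S\setminus\{b\}\subseteq A\setminus\{a\}\}.
\]
We check domination. Let $S\notin\mathcal D$, so $a\notin S$. If $S$ contains some $y\in B$, then $S\setminus\{y\}\cup\{a\}$ is adjacent to $S$, since $ya\in E$ and $a\notin S$, and it lies in $\mathcal D$. Otherwise $S\subseteq A\setminus\{a\}$. Then $b\notin S$, and for any $x\in S$ the set $S\setminus\{x\}\cup\{b\}$ is adjacent to $S$ and lies in the second family. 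Hence
\[
\gamma\le |\mathcal D|=\binom{m+n-1}{k-1}+\binom{m-1}{k-1}.
\]

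It remains to compare this with the target. Put $N=m+n$. Then
\[
\binom{N-1}{k-1}=\frac{k}{N}\binom{N}{k},
\]
so we must show
\[
\binom{m-1}{k-1}\le \binom{N}{k}\left(\frac{n}{N-k+1}-\frac{k}{N}\right)=\binom{N}{k}\frac{nN-k(N-k+1)}{N(N-k+1)}.
\]
Since $k\le N/2\le n$, the numerator satisfies $nN-k(N-k+1)\ge n(k-1)\ge k-1$. It is therefore enough that
\[
\binom{m-1}{k-1}N(N-k+1)\le (k-1)\binom{N}{k}.
\]
Because $\binom{N}{k}$ dwarfs $\binom{m-1}{k-1}$ (we have $N\ge m+2$, and $k\ge3$), this holds after a routine ratio comparison. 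For example, use $\binom{N}{k}\ge \binom{m+1}{k-1}\cdot\frac{N(N-1)}{k(k-1)}$-type estimates obtained by peeling off factors. If $k>m$ the second family is empty and the bound is immediate.

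I expect this final inequality, which must hold uniformly over all $k\ge3$ with $2k\le N$ and all $m\le n$, to be the main obstacle. The delicate regime is $k$ close to $N/2$ with $m$ close to $n$. If the crude estimate fails there, the fallback is to refine the construction: dominate the sets $S\subseteq A\setminus\{a\}$ more cheaply, or choose the special vertex in $B$ rather than $A$, so that the extra family shrinks.
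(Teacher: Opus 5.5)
\textbf{There is a genuine gap in the comparison step of your upper bound.}

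Your lower bound is correct. Your edge-bijection argument for $\Delta(F_k(K_{m,n}))\le mn$ is cleaner than the paper's convexity argument in $\ell$. Your family $\mathcal D$ really is a dominating set, so $\gamma(F_k(K_{m,n}))\le\binom{N-1}{k-1}+\binom{m-1}{k-1}$.

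This is a different route from the paper. The paper combines $\gamma\le\alpha$ with the double-counting bound $\alpha(F_k(G))\le\frac1k\binom{|X|}{k-1}\alpha(G)$ and $\alpha(K_{m,n})=n$. That lands exactly on $\frac nk\binom{N}{k-1}=\frac{n}{N-k+1}\binom Nk$, with no numerical comparison needed.

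The problem is where you weaken $nN-k(N-k+1)\ge n(k-1)$ to $\ge k-1$. This throws away the factor $n$, and the sufficient condition you then state,
\[
\binom{m-1}{k-1}N(N-k+1)\le (k-1)\binom{N}{k},
\]
is false.
\begin{itemize}
\item For $m=n=4$, $k=3$ the left side is $3\cdot 8\cdot 6=144$, while the right side is $2\cdot 56=112$.
\item For $k=3$, $m=n$ the ratio of the two sides is $\frac{3(m-1)(m-2)}{2(2m-1)}$, which grows linearly in $m$.
\end{itemize}
So no ``routine ratio comparison'' can prove it. The estimate you suggest, $\binom{N}{k}\ge\binom{m+1}{k-1}\frac{N(N-1)}{k(k-1)}$, also fails on the same example ($56<\frac{280}{3}$). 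The dangerous regime is small $k$ with large $m$, not $k$ near $N/2$.

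\textbf{The repair is short if you keep the factor $n$.} Since $\binom Nk=\frac{N-k+1}{k}\binom{N}{k-1}$ and $\binom{N-1}{k-1}=\frac{N-k+1}{N}\binom{N}{k-1}$,
\[
\frac{n}{N-k+1}\binom{N}{k}-\binom{N-1}{k-1}=\frac{nN-k(N-k+1)}{kN}\binom{N}{k-1}\ge\frac{n(k-1)}{kN}\binom{N}{k-1}\ge\frac13\binom{N}{k-1},
\]
using $n\ge k$, $n\ge N/2$ and $k\ge 3$.

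On the other hand, if $k\le m$ then
\[
\binom{2m}{k-1}/\binom{m-1}{k-1}=\prod_{i=0}^{k-2}\frac{2m-i}{m-1-i}\ge 2^{k-1}\ge 4,
\]
so $\binom{m-1}{k-1}\le\frac14\binom{2m}{k-1}\le\frac14\binom{N}{k-1}$. If $k>m$ then $\binom{m-1}{k-1}=0$.

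With this fix your argument is complete, and it proves more than the theorem. Your bound $\binom{N-1}{k-1}+\binom{m-1}{k-1}$ is at most $\bigl(\frac kN+\frac{k}{4(N-k+1)}\bigr)\binom Nk$. The paper's upper bound is always at least $\frac12\binom Nk$. What the paper's route buys is brevity, at the cost of relying on the cited independence-number estimate.
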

The theorems above are the main results.

This paper is organized as follows. In Section 2, we discuss some preliminary facts on token graphs and domination number. In Section 3, we give an upper bound of $\gamma(F_2(K_{m,n}))$. In Section 4, we give a lower bound of $\gamma(F_2(K_{m,n}))$, and prove Theorem \ref{main1}. In Section 5, we prove  Theorem \ref{main2}. Section 6 is an appendix that illustrates the structure of $F_k(K_{m,n})$.

\section{Prelimininaries}
\label{prelim}
Let $G=(X,E)$ denote an undirected graph that is connected, with no loops or multiple edges, vertex set $X$, and edge set $E$. For a subset $Y\subseteq X$, define
\begin{equation*}
    N[Y]=Y\cup \{x\in X\mid \text{$x$ is adjacent to some vertex in $Y$}\}.
\end{equation*}

\begin{definition}
    We say that a subset $Y\subseteq X$ is a \textit{dominating set of $G$} whenever $N[Y]=X$. The \textit{domination number of $G$}, denoted by $\gamma(G)$, is the minimum cardinality of a dominating set of $G$.
\end{definition}

\begin{lemma}{\rm{\cite{Walikar}}}
\label{DeltaG}
    For a graph $G$, we have 
    \begin{equation}
    \label{deltag}
        \frac{\vert X\vert}{1+\Delta(G)}\leq \gamma(G),
    \end{equation}
    where $\Delta(G)$ is the maximum degree of $G$. 
\end{lemma}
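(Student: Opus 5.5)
The statement to prove is Lemma~\ref{DeltaG}: $\vert X\vert/(1+\Delta(G))\leq \gamma(G)$. The plan is a direct counting argument: every vertex of the graph must be covered by the closed neighbourhood of some vertex in a dominating set, and each closed neighbourhood is small.

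First, fix a dominating set $Y\subseteq X$ with $\vert Y\vert=\gamma(G)$. Since $N[Y]=X$, the definition of $N[Y]$ gives
\begin{equation*}
X=\bigcup_{y\in Y} N[\{y\}],
\end{equation*}
where $N[\{y\}]$ consists of $y$ together with all vertices adjacent to $y$.

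Second, bound each term. The set $N[\{y\}]$ consists of $y$ and its $\deg(y)$ neighbours. Because $G$ has no loops or multiple edges, these neighbours are distinct from $y$ and from each other. Hence
\begin{equation*}
\vert N[\{y\}]\vert = 1+\deg(y)\leq 1+\Delta(G).
\end{equation*}

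Third, apply the union bound (subadditivity of cardinality). This gives
\begin{equation*}
\vert X\vert\leq \sum_{y\in Y}\vert N[\{y\}]\vert\leq \vert Y\vert\,(1+\Delta(G))=\gamma(G)\,(1+\Delta(G)).
\end{equation*}
Dividing by $1+\Delta(G)>0$ yields \eqref{deltag}.

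There is no real obstacle. The only points needing care are that the closed neighbourhood of a single vertex has exactly $1+\deg(y)$ elements, which uses the simple-graph hypothesis, and that the union bound only needs to give an inequality, not equality.
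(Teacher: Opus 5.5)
Your proof is correct: $X$ is the union of the closed neighbourhoods $N[\{y\}]$ for $y\in Y$, each of size at most $1+\Delta(G)$, so $\vert X\vert\leq \gamma(G)\,(1+\Delta(G))$. The paper gives no proof of its own and only cites \cite{Walikar}; your closed-neighbourhood covering count is the standard proof of this bound.
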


\begin{definition}
    We say that a subset $Y\subseteq X$ is an \textit{independence set of $G$} whenever no pair of vertices in $Y$ are adjacent. The \textit{independence number of $G$}, denoted by $\alpha(G)$, is the greatest cardinality of an independence set of $G$.
\end{definition}

\begin{lemma}{\rm{\cite[p.~29]{Haynes}}}
\label{gammaalpha}
    For a graph $G$, we have $\gamma(G)\leq \alpha(G)$.
\end{lemma}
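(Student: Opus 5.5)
The plan is to show that a largest independence set of $G$ is already a dominating set of $G$. The inequality $\gamma(G)\le \alpha(G)$ then follows at once from the definition of $\gamma(G)$ as a minimum over dominating sets.

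Fix an independence set $Y\subseteq X$ with $\vert Y\vert=\alpha(G)$; such a set exists because $X$ is finite and the empty set is independent. I would argue by contradiction that $N[Y]=X$. Suppose some vertex $x\in X$ lies outside $N[Y]$. Then $x\notin Y$, and $x$ is adjacent to no vertex of $Y$. Since $Y$ is independent, no two vertices of $Y\cup\{x\}$ are adjacent, so $Y\cup\{x\}$ is an independence set. Its cardinality is $\alpha(G)+1$, which contradicts the maximality of $\alpha(G)$. Hence $N[Y]=X$, and $Y$ is a dominating set of $G$.

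Therefore $\gamma(G)\le \vert Y\vert=\alpha(G)$. The same argument shows that every inclusion-maximal independence set is dominating, which is the form in which the lemma is typically applied when building dominating sets of $F_k(K_{m,n})$.

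There is no real obstacle here. The only point to check is that adding the undominated vertex $x$ to $Y$ preserves independence, and this is exactly the statement that $x\notin N[Y]$. The standing hypotheses that $G$ is connected and has no loops or multiple edges are not needed for this argument.
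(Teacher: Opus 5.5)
Your proof is correct and is the standard argument: a maximum independence set is inclusion-maximal, so any vertex outside $N[Y]$ could be added to it, and therefore it is dominating, giving $\gamma(G)\le\alpha(G)$. The paper gives no proof of its own and only cites Haynes, where the inequality rests on the same fact that every maximal independent set is dominating. One small correction to your closing remark: the paper does not use the lemma to build dominating sets of $F_k(K_{m,n})$, only as the numerical step $\gamma(F_k(K_{m,n}))\le\alpha(F_k(K_{m,n}))$, which it combines with its upper bound on the independence number.
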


Next we define token graphs of $G$.

\begin{definition}
    Pick a positive integer $k\leq \vert X\vert$. The $k$-token graph of $G$, denoted by $F_k(G)$, is defined as follows. The vertex set $\mathcal{X}$ of $F_k(G)$ consists of all $k$-subsets of $X$. Two vertices $x,y\in \mathcal{X}$ are adjacent whenever the symmetric difference of $x$ and $y$ consists of two vertices that form an edge in $E$.
\end{definition}
Observe that 
    \begin{equation}
    \label{vertcalX}
        \vert \mathcal{X}\vert=\binom{\vert X\vert}{k}.
    \end{equation}
    Observe that the graphs $F_k(G)$ and $F_{\vert X\vert-k}(G)$ are isomorphic. Observe that for two isomorphic graphs $G_1$ and $G_2$, the token graphs $F_k(G_1),F_k(G_2)$ are isomorphic. 

\begin{lemma}{\rm{\cite[Theorem~2.1]{Abdel}}}
    For a positive integer $k\leq \frac{\vert X\vert}{2}$ we have
    \begin{equation}
    \label{alphaG}
        \alpha (F_k(G))\leq\frac{1}{k}\binom{\vert X\vert}{k-1}\alpha(G).
    \end{equation}
\end{lemma}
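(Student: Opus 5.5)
The plan is a double-counting argument that bounds each $(k-1)$-subset's contribution by $\alpha(G)$. Let $\mathcal{I}\subseteq\mathcal{X}$ be an independence set of $F_k(G)$ with $\vert\mathcal{I}\vert=\alpha(F_k(G))$. I will count the set $P$ of pairs $(S,x)$, where $S$ is a $(k-1)$-subset of $X$, $x\in X\setminus S$, and $S\cup\{x\}\in\mathcal{I}$. I count $\vert P\vert$ in two ways.

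First, count by elements of $\mathcal{I}$. Each $y\in\mathcal{I}$ is a $k$-set and arises exactly from the $k$ pairs $(y\setminus\{x\},x)$ with $x\in y$. Hence $\vert P\vert=k\vert\mathcal{I}\vert$.

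Second, count by the $(k-1)$-subsets $S$. For such an $S$ put $A_S=\{x\in X\setminus S \mid S\cup\{x\}\in\mathcal{I}\}$, so that $\vert P\vert=\sum_S\vert A_S\vert$. The key claim is that $A_S$ is an independence set of $G$. Suppose $x,x'\in A_S$ are distinct and adjacent in $G$. The symmetric difference of $S\cup\{x\}$ and $S\cup\{x'\}$ is exactly $\{x,x'\}$, which is an edge in $E$. So these two elements of $\mathcal{I}$ would be adjacent in $F_k(G)$, contradicting independence. Therefore $\vert A_S\vert\leq\alpha(G)$ for every $S$, and since there are $\binom{\vert X\vert}{k-1}$ choices of $S$,
\begin{equation*}
k\,\alpha(F_k(G))=\vert P\vert=\sum_S\vert A_S\vert\leq\binom{\vert X\vert}{k-1}\alpha(G).
\end{equation*}
Dividing by $k$ gives \eqref{alphaG}.

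I expect no real obstacle here. The only step with content is checking that $A_S$ is independent, and that follows directly from the definition of adjacency in $F_k(G)$. The argument never uses the hypothesis $k\leq\vert X\vert/2$, so the inequality should hold for all $1\leq k\leq\vert X\vert$. The hypothesis presumably just marks the range where the bound is useful, since $F_k(G)\cong F_{\vert X\vert-k}(G)$ lets one reduce to that range.
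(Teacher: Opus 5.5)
Your double-counting argument is correct and complete. Each member of a maximum independent set of $F_k(G)$ contributes exactly $k$ pairs $(S,x)$. For each $(k-1)$-set $S$, the admissible $x$ form an independent set of $G$, because the symmetric difference of $S\cup\{x\}$ and $S\cup\{x'\}$ is $\{x,x'\}$. This gives $k\,\alpha(F_k(G))\le\binom{\vert X\vert}{k-1}\alpha(G)$.

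The paper gives no proof of this lemma; it only quotes it as Theorem~2.1 of the cited reference. So your argument serves as a self-contained justification rather than something to compare against a proof in the paper.

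You are also right that the hypothesis $k\le\vert X\vert/2$ is never used. Dropping it costs nothing in the paper's application, where $m+n\ge 2k$ is assumed anyway.
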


\begin{lemma}
\label{importantxy}
    Pick an integer $k\geq 2$. Let $S$ denote a dominating set of $F_k(G)$. Then the set
    \begin{equation}
    \label{nothing}
        \{x\in X\mid \text{no element in $S$ contains $x$}\}
    \end{equation}
    has cardinality at most $1$.
\end{lemma}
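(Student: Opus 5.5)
We argue by contradiction. Suppose two distinct vertices $a,b\in X$ both lie in the set (\ref{nothing}), so no element of $S$ contains $a$ and none contains $b$. The plan is to exhibit a single $k$-subset $z\in\mathcal{X}$ that contains both $a$ and $b$. Such a $z$ is not dominated by $S$, which contradicts $N[S]=\mathcal{X}$.

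Since $k\ge 2$ and $a\ne b$, we may choose $z$ to be any $k$-subset of $X$ with $\{a,b\}\subseteq z$. This is possible because $k\le |X|$ and $|X|\ge 2$. First, $z\notin S$, because every element of $S$ avoids $a$ while $z$ contains $a$.

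Next suppose $z$ is adjacent to some $y\in S$. By the definition of $F_k(G)$, the symmetric difference $z\triangle y$ is a 2-set $\{u,v\}$ forming an edge, with $u\in z\setminus y$, $v\in y\setminus z$, and $y=(z\setminus\{u\})\cup\{v\}$. So $y$ differs from $z$ by removing exactly one element of $z$. Since $a,b\in z$ are distinct, at least one of them is not $u$, and that one still lies in $y$. Hence $y$ contains $a$ or $b$, contradicting the choice of $a,b$. Therefore $z\notin N[S]$, which is the required contradiction, and the set (\ref{nothing}) has at most one element.

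No step is a real obstacle. The only point needing care is the observation that adjacent vertices of $F_k(G)$ share exactly $k-1$ elements, which follows at once from the symmetric difference having size $2$ together with $|y|=|z|=k$. This uses only the definition of the token graph, not the edge structure of $G$.
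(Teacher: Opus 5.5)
Your proof is correct and is the paper's argument: a $k$-subset containing both uncovered elements is neither in $S$ nor adjacent to any member of $S$. You also spell out two details the paper leaves implicit: why such a $k$-subset exists, and why any neighbor in $F_k(G)$ still contains one of the two elements, since it shares $k-1$ elements with that subset.
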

\begin{proof}
    Assume that distinct $x,y$ are contained in the set (\ref{nothing}). Then a vertex that contains both $x,y$ is not contained in $N[S]$, which contradicts the assumption that $S$ is a dominating set. The result follows.
\end{proof}

Next we define a complete bipartite graph.
\begin{definition}
\label{bipartitedef}
    For positive integers $m,n$ the complete bipartite graph $K_{m,n}$ is the complement of the disjoint union of complete graphs $K_{m}$ and $K_{n}$. Let $A=\{a_1,a_2,\dots, a_m\}$ (resp. $B=\{b_1,b_2,\dots, b_n\}$) denote the vertex set of $K_{m}$ (resp. $K_{n}$).
\end{definition}

Observe that the graphs $K_{m,n}$ and $K_{n,m}$ are isomorphic. We also observe that the graph $K_{1,n}$ is isomorphic to a star graph, which is heavily discussed in \cite{FabGom}. Throughout the paper, we assume that $G=K_{m,n}$, where $2\leq m\leq n$.

\begin{lemma}{\rm{\cite[Section~3]{West}}}
    Let $2\leq m\leq n$. We have
    \begin{equation}
    \label{alphaK}
        \alpha(K_{m,n})=n.
    \end{equation}
\end{lemma}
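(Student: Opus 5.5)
The statement is the classical fact that $\alpha(K_{m,n})=n$ for $2\leq m\leq n$, and I would prove it directly from Definition \ref{bipartitedef} by exhibiting an independence set of size $n$ and then showing that no independence set is larger.

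For the lower bound, take the set $B=\{b_1,\dots,b_n\}$. By Definition \ref{bipartitedef}, $K_{m,n}$ is the complement of the disjoint union of $K_m$ on $A$ and $K_n$ on $B$. Any two vertices of $B$ are adjacent in $K_n$, hence nonadjacent in $K_{m,n}$. So $B$ is an independence set, and $\alpha(K_{m,n})\geq n$.

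For the upper bound, let $Y$ be any independence set of $K_{m,n}$. The key observation is that for every $a\in A$ and $b\in B$, the pair $\{a,b\}$ is an edge of $K_{m,n}$, because $a$ and $b$ lie in different components of $K_m\sqcup K_n$. Hence $Y$ cannot meet both $A$ and $B$, so $Y\subseteq A$ or $Y\subseteq B$. This gives $\vert Y\vert\leq \max(m,n)=n$, using $m\leq n$. Combining the two bounds yields $\alpha(K_{m,n})=n$.

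There is no real obstacle. The only point needing care is the dichotomy $Y\subseteq A$ or $Y\subseteq B$, which follows immediately from the complement description of the edge set. The hypothesis $m\geq 2$ is not needed for this argument, and $m\leq n$ is used only to identify the maximum.
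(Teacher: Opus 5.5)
Your proof is correct and complete. $B$ is an independence set because every pair inside $B$ is an edge of $K_n$, hence a non-edge of $K_{m,n}$. Every independence set lies entirely in $A$ or entirely in $B$ because every pair $\{a,b\}$ with $a\in A$, $b\in B$ is an edge, so $\alpha(K_{m,n})=\max(m,n)=n$.

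The paper gives no argument of its own and simply cites West, so your direct two-sided argument supplies the standard proof that the citation stands in for. You are also right that the hypothesis $m\geq 2$ plays no role.
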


Using (\ref{vertcalX}) and the fact that $\vert X\vert=m+n$, we obtain
\begin{equation}
\label{bipartitecalX}
    \vert \mathcal{X}\vert=\binom{m+n}{k}.
\end{equation}

\begin{definition}
    Let $2\leq m\leq n$. Pick a positive integer $k$ such that $m+n\geq 2k$. For $0\leq \ell\leq \min (m,k)$ let the set $\mathcal{C}_{\ell}$ consist of the vertices in $\mathcal{X}$ that contain exactly $\ell$ elements of $A$.

    For convenience, define $\mathcal{C}_{-1}=\emptyset$ and $\mathcal{C}_{\min(m,k)+1}=\emptyset$. Observe that the sets $\{\mathcal{C}_{\ell}\}_{\ell=0}^{\min(m,k)}$ form a partition of $\mathcal{X}$.
\end{definition}

\begin{lemma}
    Let $2\leq m\leq n$. Pick a positive integer $k$ such that $m+n\geq 2k$. For $0\leq \ell\leq \min(m,k)$ the cardinality of the set $\mathcal{C}_{\ell}$ is
    \begin{equation}
    \label{vertC}
        \vert \mathcal{C}_{\ell}\vert=\binom{m}{\ell}\binom{n}{k-\ell}.
    \end{equation}
\end{lemma}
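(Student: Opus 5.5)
The count follows from building an element of $\mathcal{C}_{\ell}$ in two independent steps. By Definition \ref{bipartitedef}, the vertex set $X$ of $K_{m,n}$ is the disjoint union $A\cup B$ with $\vert A\vert=m$ and $\vert B\vert=n$. Every $k$-subset $x\in\mathcal{X}$ therefore splits uniquely as $x=(x\cap A)\cup(x\cap B)$, and $x\in\mathcal{C}_{\ell}$ exactly when $\vert x\cap A\vert=\ell$, which forces $\vert x\cap B\vert=k-\ell$.

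I would write down the map $\mathcal{C}_{\ell}\to\binom{A}{\ell}\times\binom{B}{k-\ell}$ given by $x\mapsto (x\cap A,\,x\cap B)$. Here $\binom{A}{\ell}$ denotes the set of $\ell$-subsets of $A$, and similarly for $B$. This map is a bijection. Its inverse is $(U,V)\mapsto U\cup V$, which is a $k$-subset because $A\cap B=\emptyset$, and which has exactly $\ell$ elements in $A$. Taking cardinalities gives $\vert\mathcal{C}_{\ell}\vert=\binom{m}{\ell}\binom{n}{k-\ell}$.

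The only point needing care is the range of $\ell$. The condition $0\le \ell\le\min(m,k)$ ensures that $\binom{m}{\ell}$ is a genuine binomial coefficient with $0\le\ell\le m$. The factor $\binom{n}{k-\ell}$ is then either the correct count or $0$ when $k-\ell>n$. This case cannot actually occur here, because $m+n\ge 2k$ and $m\le n$ give $k\le n$, so $0\le k-\ell\le n$. Either way the formula holds.

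As a consistency check, summing over $\ell$ with Vandermonde's identity recovers (\ref{bipartitecalX}). This matches the fact that the sets $\mathcal{C}_{\ell}$ partition $\mathcal{X}$. No step is a real obstacle.
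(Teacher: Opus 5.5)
Your proof is correct and takes the same approach as the paper, which simply says ``by combinatorial counting''; your bijection $x\mapsto(x\cap A,\,x\cap B)$ is the precise form of that count. Your side remark that $m+n\geq 2k$ and $m\leq n$ force $k\leq n$ (so $0\leq k-\ell\leq n$) is also correct, though the formula does not depend on it.
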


\begin{proof}
    By combinatorial counting.
\end{proof}

\begin{lemma}
\label{threelemmas}
    Let $2\leq m\leq n$. Pick a positive integer $k$ such that $m+n\geq 2k$. Pick an integer $\ell$ where $0\leq \ell\leq \min(m,k)$. For $\chi\in \mathcal{C}_{\ell}$ the following (i)--(iii) hold:
    \begin{enumerate}[label=(\roman*)]
        \item there are $\ell(n-k+\ell)$ vertices in $\mathcal{C}_{\ell-1}$ that are adjacent to $\chi$;

        \item there are $(m-\ell)(k-\ell)$ vertices in $\mathcal{C}_{\ell+1}$ that are adjacent to $\chi$;
        
        \item the degree of $\chi$ is 
    \begin{equation}
    \label{degchi}
        \ell(n-k+\ell)+(m-\ell)(k-\ell).
    \end{equation}
    \end{enumerate}
\end{lemma}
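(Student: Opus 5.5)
We count neighbors directly from the definition of adjacency in $F_k(K_{m,n})$. Fix $\chi\in\mathcal{C}_\ell$, so $\chi$ contains exactly $\ell$ elements of $A$ and $k-\ell$ elements of $B$. A vertex $y\in\mathcal{X}$ is adjacent to $\chi$ exactly when $y=(\chi\setminus\{u\})\cup\{v\}$ for some $u\in\chi$ and $v\notin\chi$ with $uv\in E$. The map $(u,v)\mapsto(\chi\setminus\{u\})\cup\{v\}$ is injective, because $u$ and $v$ are recovered from the symmetric difference $\chi\triangle y=\{u,v\}$. So the neighbors of $\chi$ are in bijection with the ordered pairs $(u,v)$ with $u\in\chi$, $v\in X\setminus\chi$, and $uv\in E$. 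In $K_{m,n}$, edges join $A$ to $B$ only, so there are exactly two types of such pairs.

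For (i), take $u\in\chi\cap A$ and $v\in B\setminus\chi$. Then $y$ has $\ell-1$ elements of $A$, so $y\in\mathcal{C}_{\ell-1}$. There are $\ell$ choices for $u$ and $n-(k-\ell)=n-k+\ell$ choices for $v$, giving $\ell(n-k+\ell)$ neighbors. For (ii), take $u\in\chi\cap B$ and $v\in A\setminus\chi$. Then $y$ has $\ell+1$ elements of $A$, so $y\in\mathcal{C}_{\ell+1}$. There are $k-\ell$ choices for $u$ and $m-\ell$ choices for $v$, giving $(m-\ell)(k-\ell)$ neighbors.

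We also need to rule out neighbors of $\chi$ in $\mathcal{C}_\ell$ itself. Such a pair would need $u$ and $v$ on the same side, which is not an edge of $K_{m,n}$. So no neighbor of $\chi$ lies in $\mathcal{C}_\ell$, and every neighbor falls into exactly one of the two types above. Adding the two counts gives (iii), the degree formula (\ref{degchi}).

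The boundary cases are consistent with the conventions $\mathcal{C}_{-1}=\mathcal{C}_{\min(m,k)+1}=\emptyset$. If $\ell=0$, the count in (i) is $0$. If $\ell=m$ or $\ell=k$, the count in (ii) is $0$. The factor $n-k+\ell$ is nonnegative because $k-\ell\le n$. No step is a real obstacle; the only care needed is the injectivity of the map $(u,v)\mapsto y$, so that pairs and neighbors are counted one-to-one.
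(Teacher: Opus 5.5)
Your proof is correct and takes the same approach as the paper. The paper simply says (i) and (ii) follow from combinatorial counting and that every neighbor of $\chi$ lies in $\mathcal{C}_{\ell-1}\cup\mathcal{C}_{\ell+1}$; you make that counting explicit through the injection $(u,v)\mapsto(\chi\setminus\{u\})\cup\{v\}$ and the observation that same-side swaps are not edges of $K_{m,n}$.
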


\begin{proof}
    (i) and (ii) follow from combinatorial counting. (iii) follows from (i), (ii), and the fact that any vertex adjacent to $\chi$ is contained in $\mathcal{C}_{\ell-1}$ or $\mathcal{C}_{\ell+1}$.
\end{proof}

\begin{remark}
\label{remark}
    Let $2\leq m\leq n$. Pick a positive integer $k$ such that $m+n\geq 2k$. The partition
    \begin{equation*}
    \mathcal{X}=\bigcup_{\ell=0}^{\min(m,k)}\mathcal{C}_{\ell}
    \end{equation*}
    is equitable in the sense of \cite[p.~159]{Schwenk}. See Figure \ref{class} in the appendix for the illustration of this partition.
\end{remark}

\section{An upper bound of $\gamma(F_2(K_{m,n}))$}
We continue to work with $G=K_{m,n}$, where $2\leq m\leq n$. In this section we present an upper bound on the domination number of $F_2(K_{m,n})$. To do this, we consider each of the cases (C1)--(C4) from Section \ref{intro}.

We first consider the case (C1).
\begin{lemma}
\label{evenupper}
    For even $m$ and $n\geq m$, we have
    \begin{equation*}
        \gamma(F_2(K_{m,n}))\leq \frac{m}{2}+n-1.
    \end{equation*}
\end{lemma}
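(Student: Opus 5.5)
The plan is to exhibit an explicit dominating set $S$ of $F_2(K_{m,n})$ with $\vert S\vert=\frac{m}{2}+n-1$. The set $S$ will be built from the classes $\mathcal{C}_2$ and $\mathcal{C}_1$. First I record the adjacencies from Lemma \ref{threelemmas} concretely for $k=2$:
\begin{itemize}
\item a vertex $\{a_i,b_j\}\in\mathcal{C}_1$ is adjacent to every $\{b_j,b\}\in\mathcal{C}_0$ (replace $a_i$ by some $b\neq b_j$);
\item it is also adjacent to every $\{a_i,a\}\in\mathcal{C}_2$ (replace $b_j$ by some $a\neq a_i$);
\item a vertex $\{a_i,a_j\}\in\mathcal{C}_2$ is adjacent to all $\{a_i,b\}$ and $\{a_j,b\}$ with $b\in B$.
\end{itemize}

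Since $m$ is even, take the perfect matching $\{a_1,a_2\},\{a_3,a_4\},\dots,\{a_{m-1},a_m\}$ of $A$ and put these $\frac{m}{2}$ vertices of $\mathcal{C}_2$ into $S$. Every vertex $\{a_i,b\}\in\mathcal{C}_1$ has $a_i$ in some matching pair, so it is dominated. Hence all of $\mathcal{C}_1$ is dominated, and so are these $\frac{m}{2}$ matching pairs themselves.

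Next add the $n-1$ vertices $\{a_j,b_j\}$ for $1\le j\le m-1$ and $\{a_1,b_j\}$ for $m\le j\le n-1$. This is possible since $n-1\ge m-1$, and these vertices lie in $\mathcal{C}_1$, which is disjoint from the first batch. The $b$'s used are $b_1,\dots,b_{n-1}$. Every pair $\{b_i,b_j\}\in\mathcal{C}_0$ contains at least one of these, so $\mathcal{C}_0$ is dominated.

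The $a$'s used are $a_1,\dots,a_{m-1}$. Every pair $\{a_i,a_j\}\in\mathcal{C}_2$ contains at least one of them, because only $a_m$ is missing. So all of $\mathcal{C}_2$ is dominated, including the non-matching pairs, which were the only remaining concern.

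Thus $N[S]=\mathcal{X}$ and $\vert S\vert=\frac{m}{2}+n-1$, which proves the bound. The only delicate point is making the second batch simultaneously cover all $B$-pairs and all non-matching $A$-pairs. This is resolved by using all but one element of each side, which is possible because $n\ge m$.
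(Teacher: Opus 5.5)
Your proof is correct and follows essentially the paper's approach: the $\frac{m}{2}$ matching pairs of $A$ in $\mathcal{C}_2$ dominate $\mathcal{C}_1$, and $n-1$ further vertices of $\mathcal{C}_1$ cover $b_1,\dots,b_{n-1}$ and all but at most one element of $A$, which handles $\mathcal{C}_0$ and the rest of $\mathcal{C}_2$. The paper splits into the cases $n=m$, $n=m+1$, $n\geq m+2$ and attaches $b_m,\dots,b_{n-1}$ to $a_m$ rather than $a_1$; your single construction covers all $n\geq m$ at once and also carries out the domination check that the paper leaves to the reader.
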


\begin{proof}
    Assume that $n=m$. The set
    \begin{equation}
    \label{dom4m1}
        \bigcup_{i=1}^{m/2}\{\{a_{2i-1},a_{2i}\}\}\cup \bigcup_{j=1}^{m-1} \{\{a_j,b_j\}\} 
    \end{equation}
    is a dominating set of $F_2(K_{m,m})$ that has size $\dfrac{m}{2}+m-1$. 
    
    Assume that $n=m+1$. The set 
    \begin{equation}
    \label{dom4m2}
        \bigcup_{i=1}^{m/2}\{\{a_{2j-1},a_{2j}\}\}\cup \bigcup_{j=1}^{m} \{\{a_i,b_i\}\}
    \end{equation}
    is a dominating set of $F_2(K_{m,m+1})$ that has size $\dfrac{m}{2}+m$.

    Assume that $n\geq m+2$. The set 
    \begin{equation}
    \label{dom4m3}
        \bigcup_{h=1}^{m/2}\{\{a_{2h-1},a_{2h}\}\}\cup \bigcup_{i=1}^{m} \{\{a_i,b_i\}\}\cup \bigcup_{j=m+1}^{n-1} \{\{a_m,b_j\}\} 
    \end{equation}
    is a dominating set of $F_2(K_{m,n})$ that has size $\dfrac{m}{2}+n-1$. The result follows.
\end{proof}
We illustrate the sets (\ref{dom4m1})--(\ref{dom4m3}) for $m\geq 4$ in Figure \ref{fig:dom4m}.

\begin{figure}[!ht]
    \centering
        \begin{tikzpicture}[
    every node/.style={circle, fill=black, inner sep=2.5pt},
    >=stealth
]

\node (a11) at (0,2.8) {};
\node (a12) at (0,2.0) {};
\node[draw=none, fill=none, shape=rectangle] (vdots1) at (0, 1.2) {$\vdots$};
\node (a1m-3) at (0,0.4) {};
\node (a1m-2) at (0,-0.4) {};
\node (a1m-1) at (0,-1.2) {};
\node (a1m) at (0,-2) {};

\node (a21) at (4.5,2.8) {};
\node (a22) at (4.5,2.0) {};
\node[draw=none, fill=none, shape=rectangle] (vdots3) at (4.5, 1.2) {$\vdots$};
\node (a2m-3) at (4.5,0.4) {};
\node (a2m-2) at (4.5,-0.4) {};
\node (a2m-1) at (4.5,-1.2) {};
\node (a2m) at (4.5,-2) {};

\node (a31) at (9,2.8) {};
\node (a32) at (9,2.0) {};
\node[draw=none, fill=none, shape=rectangle] (vdots5) at (9, 1.2) {$\vdots$};
\node (a3m-3) at (9,0.4) {};
\node (a3m-2) at (9,-0.4) {};
\node (a3m-1) at (9,-1.2) {};
\node (a3m) at (9,-2) {};

\node (b11) at (1.8,2.8) {};
\node (b12) at (1.8, 2.0) {};
\node[draw=none, fill=none, shape=rectangle] (vdots2) at (1.8, 1.2) {$\vdots$};
\node (b1m-3) at (1.8,0.4) {};
\node (b1m-2) at (1.8,-0.4) {};
\node (b1m-1) at (1.8,-1.2) {};
\node (b1m) at (1.8,-2) {};

\node (b21) at (6.3,2.8) {};
\node (b22) at (6.3, 2.0) {};
\node[draw=none, fill=none, shape=rectangle] (vdots4) at (6.3, 1.2) {$\vdots$};
\node (b2m-3) at (6.3,0.4) {};
\node (b2m-2) at (6.3,-0.4) {};
\node (b2m-1) at (6.3,-1.2) {};
\node (b2m) at (6.3,-2) {};
\node (b2m+1) at (6.3,-2.8) {};

\node (b31) at (10.8,2.8) {};
\node (b32) at (10.8, 2.0) {};
\node[draw=none, fill=none, shape=rectangle] (vdots6) at (10.8, 1.2) {$\vdots$};
\node (b3m-3) at (10.8,0.4) {};
\node (b3m-2) at (10.8,-0.4) {};
\node (b3m-1) at (10.8,-1.2) {};
\node (b3m) at (10.8,-2) {};
\node[draw=none, fill=none, shape=rectangle] (vdots7) at (10.8, -2.8) {$\vdots$};
\node (b3n-1) at (10.8,-3.6) {};
\node (b3n) at (10.8,-4.4) {};
\node[fill=none, draw=none, inner sep=0pt, left=5pt]  at (a11) {$a_1$};
\node[fill=none, draw=none, inner sep=0pt, left=5pt]  at (a12) {$a_2$};
\node[fill=none, draw=none, inner sep=0pt, left=5pt]  at (a1m-3) {$a_{m-3}$};
\node[fill=none, draw=none, inner sep=0pt, left=5pt]  at (a1m-2) {$a_{m-2}$};
\node[fill=none, draw=none, inner sep=0pt, left=5pt]  at (a1m-1) {$a_{m-1}$};
\node[fill=none, draw=none, inner sep=0pt, left=5pt]  at (a1m) {$a_m$};

\node[fill=none, draw=none, inner sep=0pt, left=5pt]  at (a21) {$a_1$};
\node[fill=none, draw=none, inner sep=0pt, left=5pt]  at (a22) {$a_2$};
\node[fill=none, draw=none, inner sep=0pt, left=5pt]  at (a2m-3) {$a_{m-3}$};
\node[fill=none, draw=none, inner sep=0pt, left=5pt]  at (a2m-2) {$a_{m-2}$};
\node[fill=none, draw=none, inner sep=0pt, left=5pt]  at (a2m-1) {$a_{m-1}$};
\node[fill=none, draw=none, inner sep=0pt, left=5pt]  at (a2m) {$a_m$};

\node[fill=none, draw=none, inner sep=0pt, left=5pt]  at (a31) {$a_1$};
\node[fill=none, draw=none, inner sep=0pt, left=5pt]  at (a32) {$a_2$};
\node[fill=none, draw=none, inner sep=0pt, left=5pt]  at (a3m-3) {$a_{m-3}$};
\node[fill=none, draw=none, inner sep=0pt, left=5pt]  at (a3m-2) {$a_{m-2}$};
\node[fill=none, draw=none, inner sep=0pt, left=5pt]  at (a3m-1) {$a_{m-1}$};
\node[fill=none, draw=none, inner sep=0pt, left=5pt]  at (a3m) {$a_m$};

\node[fill=none, draw=none, inner sep=0pt, right=5pt] at (b11) {$b_1$};
\node[fill=none, draw=none, inner sep=0pt, right=5pt] at (b12) {$b_2$};
\node[fill=none, draw=none, inner sep=0pt, right=5pt] at (b1m-3) {$b_{m-3}$};
\node[fill=none, draw=none, inner sep=0pt, right=5pt] at (b1m-2) {$b_{m-2}$};
\node[fill=none, draw=none, inner sep=0pt, right=5pt] at (b1m-1) {$b_{m-1}$};
\node[fill=none, draw=none, inner sep=0pt, right=5pt] at (b1m) {$b_m$};

\node[fill=none, draw=none, inner sep=0pt, right=5pt] at (b21) {$b_1$};
\node[fill=none, draw=none, inner sep=0pt, right=5pt] at (b22) {$b_2$};
\node[fill=none, draw=none, inner sep=0pt, right=5pt] at (b2m-3) {$b_{m-3}$};
\node[fill=none, draw=none, inner sep=0pt, right=5pt] at (b2m-2) {$b_{m-2}$};
\node[fill=none, draw=none, inner sep=0pt, right=5pt] at (b2m-1) {$b_{m-1}$};
\node[fill=none, draw=none, inner sep=0pt, right=5pt] at (b2m) {$b_m$};
\node[fill=none, draw=none, inner sep=0pt, right=5pt] at (b2m+1) {$b_{m+1}$};

\node[fill=none, draw=none, inner sep=0pt, right=5pt] at (b31) {$b_1$};
\node[fill=none, draw=none, inner sep=0pt, right=5pt] at (b32) {$b_2$};
\node[fill=none, draw=none, inner sep=0pt, right=5pt] at (b3m-3) {$b_{m-3}$};
\node[fill=none, draw=none, inner sep=0pt, right=5pt] at (b3m-2) {$b_{m-2}$};
\node[fill=none, draw=none, inner sep=0pt, right=5pt] at (b3m-1) {$b_{m-1}$};
\node[fill=none, draw=none, inner sep=0pt, right=5pt] at (b3m) {$b_m$};
\node[fill=none, draw=none, inner sep=0pt, right=5pt] at (b3n-1) {$b_{n-1}$};
\node[fill=none, draw=none, inner sep=0pt, right=5pt] at (b3n) {$b_{n}$};

\draw (a11) .. controls (-0.1,2.5) and (-0.1,2.3) .. (a12);
\draw (a1m-1) .. controls (-0.1,-1.5) and (-0.1,-1.7) .. (a1m);
\draw (a1m-3) .. controls (-0.1,0.1) and (-0.1,-0.1) .. (a1m-2);
\draw (a11)--(b11);
\draw (a12)--(b12);
\draw (a1m-3)--(b1m-3);
\draw (a1m-2)--(b1m-2);
\draw (a1m-1)--(b1m-1);

\draw (a21) .. controls (4.4,2.5) and (4.4,2.3) .. (a22);
\draw (a2m-1) .. controls (4.4,-1.5) and (4.4,-1.7) .. (a2m);
\draw (a2m-3) .. controls (4.4,0.1) and (4.4,-0.1) .. (a2m-2);
\draw (a21)--(b21);
\draw (a22)--(b22);
\draw (a2m-3)--(b2m-3);
\draw (a2m-2)--(b2m-2);
\draw (a2m-1)--(b2m-1);
\draw (a2m)--(b2m);

\draw (a31) .. controls (8.9,2.5) and (8.9,2.3) .. (a32);
\draw (a3m-1) .. controls (8.9,-1.5) and (8.9,-1.7) .. (a3m);
\draw (a3m-3) .. controls (8.9,0.1) and (8.9,-0.1) .. (a3m-2);
\draw (a31)--(b31);
\draw (a32)--(b32);
\draw (a3m-3)--(b3m-3);
\draw (a3m-2)--(b3m-2);
\draw (a3m-1)--(b3m-1);
\draw (a3m)--(b3m);
\draw (vdots7)--(a3m)--(b3n-1);

\end{tikzpicture}
    \caption{The dominating sets (\ref{dom4m1}), (\ref{dom4m2}), (\ref{dom4m3}) respectively, for $m\geq 4$. Each edge represents a vertex in $\mathcal{X}$.}
    \label{fig:dom4m}
\end{figure}
\newpage
Next, we consider the case (C2).

\begin{lemma}
\label{dom3nupper}
    For $n\geq 3$ we have $\gamma(F_2(K_{3,n}))\leq n$.
\end{lemma}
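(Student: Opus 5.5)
We prove the bound by exhibiting an explicit dominating set of $F_2(K_{3,n})$ of size $n$, in the spirit of Lemma \ref{evenupper}. We take $A=\{a_1,a_2,a_3\}$, $B=\{b_1,\dots,b_n\}$ as in Definition \ref{bipartitedef}, and propose
\begin{equation*}
    S=\{\{a_1,a_2\}\}\cup \bigcup_{j=1}^{n-2}\{\{a_3,b_j\}\}\cup\{\{b_{n-1},b_n\}\},
\end{equation*}
which has $1+(n-2)+1=n$ elements. The hypothesis $n\geq 3$ guarantees that the middle family is nonempty; we use this below.

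The first step is to record adjacency in $F_2(K_{m,n})$ concretely. Two $2$-subsets are adjacent exactly when one is obtained from the other by replacing an element of $A$ by an element of $B$, or vice versa. Thus $\{a,a'\}$ is adjacent to every $\{a,b\}$ with $b\in B$. Likewise $\{a,b\}$ is adjacent to every $\{a,a'\}$ and to every $\{b',b\}$ with $b'\neq b$. This is consistent with Lemma \ref{threelemmas}: neighbours lie only in the adjacent classes $\mathcal{C}_{\ell\pm1}$.

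The second step checks domination class by class, using the partition $\mathcal{C}_0,\mathcal{C}_1,\mathcal{C}_2$.
\begin{enumerate}[label=(\roman*)]
    \item In $\mathcal{C}_2$, the vertex $\{a_1,a_2\}$ lies in $S$. The vertices $\{a_1,a_3\}$ and $\{a_2,a_3\}$ are adjacent to $\{a_3,b_1\}\in S$.
    \item In $\mathcal{C}_1$, every $\{a_1,b\}$ and $\{a_2,b\}$ is adjacent to $\{a_1,a_2\}$. The vertices $\{a_3,b_j\}$ with $j\leq n-2$ lie in $S$. The vertices $\{a_3,b_{n-1}\}$ and $\{a_3,b_n\}$ are adjacent to $\{b_{n-1},b_n\}$ by replacing $a_3$ with $b_n$ or $b_{n-1}$.
    \item In $\mathcal{C}_0$, a vertex $\{b_i,b_j\}$ with $i\leq n-2$ is adjacent to $\{a_3,b_i\}\in S$. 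The only other vertex is $\{b_{n-1},b_n\}$, which lies in $S$.
\end{enumerate}
Hence $N[S]=\mathcal{X}$, and so $\gamma(F_2(K_{3,n}))\leq |S|=n$.

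The only delicate point is the design of $S$, not its verification. The natural analogue of (\ref{dom4m3}), namely $\{a_1,a_2\}$ together with $\{a_3,b_j\}$ for $j\leq n-1$, fails: it leaves $\{a_3,b_n\}$ undominated, because vertices of $\mathcal{C}_1$ sharing $a_3$ are never adjacent to each other. Replacing the last pair $\{a_3,b_{n-1}\}$ by $\{b_{n-1},b_n\}$ repairs this at no cost in size. Once this choice is made, everything else is routine case-checking.
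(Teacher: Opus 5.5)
Your proof is correct and uses the paper's construction up to relabeling: the paper takes $\{a_1,b_i\}$ for $1\leq i\leq n-2$ together with $\{a_2,a_3\}$ and $\{b_{n-1},b_n\}$, which is your set with $a_1$ and $a_3$ swapped. Your class-by-class verification, including the use of $n\geq 3$ so that $\{a_3,b_1\}\in S$ dominates $\{a_1,a_3\}$ and $\{a_2,a_3\}$, supplies the check that the paper leaves to the reader.
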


\begin{proof}
    The set
    \begin{equation}
    \label{dom3n}
        \bigcup_{i=1}^{n-2}\{\{a_1,b_i\}\}\cup \{\{a_2,a_3\},\{b_{n-1},b_{n}\}\}.
    \end{equation}
    is a dominating set of $F_2(K_{3,n})$ that has size $n$. The result follows.
\end{proof}

We illustrate the set (\ref{dom3n}) in Figure \ref{fig:dom3n}.

\begin{figure}[!ht]
    \centering
        \begin{tikzpicture}[
    every node/.style={circle, fill=black, inner sep=2.5pt},
    >=stealth
]

\node (a1) at (0,2.8) {};
\node (a2) at (0,2.0) {};
\node (a3) at (0,1.2) {};

\node (b1) at (1.8,2.8) {};
\node[draw=none, fill=none, shape=rectangle] (vdots) at (1.8, 2.0) {$\vdots$};
\node (bn-2) at (1.8,1.2) {};
\node (bn-1) at (1.8,0.4) {};
\node (bn) at (1.8,-0.4) {};

\node[fill=none, draw=none, inner sep=0pt, left=5pt]  at (a1) {$a_1$};
\node[fill=none, draw=none, inner sep=0pt, left=5pt]  at (a2) {$a_2$};
\node[fill=none, draw=none, inner sep=0pt, left=5pt]  at (a3) {$a_3$};

\node[fill=none, draw=none, inner sep=0pt, right=5pt] at (b1) {$b_1$};
\node[fill=none, draw=none, inner sep=0pt, right=5pt] at (bn-2) {$b_{n-2}$};
\node[fill=none, draw=none, inner sep=0pt, right=5pt] at (bn-1) {$b_{n-1}$};
\node[fill=none, draw=none, inner sep=0pt, right=5pt] at (bn) {$b_n$};

\draw (a2) .. controls (-0.1,1.7) and (-0.1,1.5) .. (a3);
\draw (bn-1) .. controls (1.9,0.1) and (1.9,-0.1) .. (bn);
\draw (b1)--(a1)--(bn-2);
\draw (a1)--(vdots);

\end{tikzpicture}
    \caption{The dominating set (\ref{dom3n}).}
    \label{fig:dom3n}
\end{figure}

Next, we consider the case (C3).

\begin{lemma}
\label{c3upper}
    For odd $m\geq 5$, we have 
    \begin{equation*}
        \gamma(F_2(K_{m,m+1}))\leq \frac{3m-1}{2}.
    \end{equation*}
\end{lemma}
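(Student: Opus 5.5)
The plan is to write down an explicit dominating set of size $\frac{3m-1}{2}$, as in Lemmas \ref{evenupper} and \ref{dom3nupper}. Since $n=m+1$ and $m$ is odd, $n$ is even, so $B$ has a perfect matching into pairs. I propose the set
\begin{equation*}
S=\bigcup_{i=1}^{m-1}\{\{a_i,b_i\}\}\cup\bigcup_{j=1}^{(m+1)/2}\{\{b_{2j-1},b_{2j}\}\}.
\end{equation*}
Its size is $(m-1)+\frac{m+1}{2}=\frac{3m-1}{2}$. The last matching pair is $\{b_m,b_{m+1}\}$, which consists of exactly the two elements of $B$ not used by the pairs $\{a_i,b_i\}$.

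To verify domination I first record the adjacency in $F_2(K_{m,n})$, which follows from the definition as in Lemma \ref{threelemmas}.
\begin{itemize}[label={}]
\item $\{a,a'\}$ is adjacent exactly to the vertices $\{a,b\}$ and $\{a',b\}$.
\item $\{b,b'\}$ is adjacent exactly to the vertices $\{a,b\}$ and $\{a,b'\}$.
\item $\{a,b\}$ is adjacent exactly to the vertices $\{a,a'\}$ with $a'\neq a$ and $\{b,b'\}$ with $b'\neq b$.
\end{itemize}
Pairs of the form $\{a,b\}$ and $\{a',b\}$ are never adjacent, since their symmetric difference $\{a,a'\}$ is not an edge. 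I then check the three classes $\mathcal{C}_2,\mathcal{C}_1,\mathcal{C}_0$ in turn.
\begin{itemize}[label={}]
\item A vertex $\{a,a'\}\in\mathcal{C}_2$ has at least one element among $a_1,\dots,a_{m-1}$, because only $a_m$ is missing. If $a_i$ is such an element, then $\{a,a'\}$ is adjacent to $\{a_i,b_i\}\in S$.
\item A vertex $\{a,b\}\in\mathcal{C}_1$ is either in $S$ or adjacent to the unique matching pair $\{b,b'\}\in S$ containing $b$. Such a pair exists because the matching covers all of $B$.
\item A vertex $\{b,b'\}\in\mathcal{C}_0$ with $b$ or $b'$ among $b_1,\dots,b_{m-1}$ is adjacent to the corresponding $\{a_i,b_i\}\in S$. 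The only remaining vertex of $\mathcal{C}_0$ is $\{b_m,b_{m+1}\}$, which lies in $S$.
\end{itemize}
Hence $N[S]=\mathcal{X}$, and the bound follows.

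The only genuinely delicate step is arranging that the two elements of $B$ left uncovered by the $\mathcal{C}_1$-vertices of $S$ form one of the matching pairs. This is what makes $\mathcal{C}_0$ fully dominated without spending an extra vertex. I have handled it by choosing the matching so that its last pair is $\{b_m,b_{m+1}\}$. The hypothesis $m\geq 5$ is not used by this construction; it only separates (C3) from (C2). I would add a figure analogous to Figure \ref{fig:dom4m} to illustrate the set.
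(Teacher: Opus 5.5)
Your proposal is correct and takes the same approach as the paper: it uses exactly the paper's dominating set (\ref{dommm+1}). Your class-by-class check that $N[S]=\mathcal{X}$ supplies the verification that the paper leaves to the reader.
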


\begin{proof}
    The set
    \begin{equation}
    \label{dommm+1}
        \bigcup_{i=1}^{m-1} \{\{a_i,b_i\}\}\cup \bigcup_{j=1}^{(m+1)/2}\{\{b_{2j-1},b_{2j}\}\}
    \end{equation}
    is a dominating set of $F_2(K_{m,m+1})$ that has size $\dfrac{3m-1}{2}$. The result follows.
\end{proof}

We illustrate the set (\ref{dommm+1}) in Figure \ref{fig:dommm+1}.

\begin{figure}[!ht]
    \centering
        \begin{tikzpicture}[
    every node/.style={circle, fill=black, inner sep=2.5pt},
    >=stealth
]

\node (a1) at (0,2.8) {};
\node (a2) at (0,2.0) {};
\node (a3) at (0,1.2) {};
\node (a4) at (0,0.4) {};
\node (a5) at (0,-0.4) {};

\node (b1) at (1.8,2.8) {};
\node (b2) at (1.8, 2.0) {};
\node (b3) at (1.8,1.2) {};
\node (b4) at (1.8,0.4) {};
\node (b5) at (1.8,-0.4) {};
\node (b6) at (1.8,-1.2) {};

\node[fill=none, draw=none, inner sep=0pt, left=5pt]  at (a1) {$a_1$};
\node[fill=none, draw=none, inner sep=0pt, left=5pt]  at (a2) {$a_2$};
\node[fill=none, draw=none, inner sep=0pt, left=5pt]  at (a3) {$a_3$};
\node[fill=none, draw=none, inner sep=0pt, left=5pt]  at (a4) {$a_4$};
\node[fill=none, draw=none, inner sep=0pt, left=5pt]  at (a5) {$a_5$};

\node[fill=none, draw=none, inner sep=0pt, right=5pt] at (b1) {$b_1$};
\node[fill=none, draw=none, inner sep=0pt, right=5pt] at (b2) {$b_2$};
\node[fill=none, draw=none, inner sep=0pt, right=5pt] at (b3) {$b_3$};
\node[fill=none, draw=none, inner sep=0pt, right=5pt] at (b4) {$b_4$};
\node[fill=none, draw=none, inner sep=0pt, right=5pt] at (b5) {$b_5$};
\node[fill=none, draw=none, inner sep=0pt, right=5pt] at (b6) {$b_6$};

\draw (b1) .. controls (1.9,2.5) and (1.9,2.3) .. (b2);
\draw (b3) .. controls (1.9,0.9) and (1.9,0.7) .. (b4);
\draw (b5) .. controls (1.9,-0.7) and (1.9,-0.9) .. (b6);
\draw (a1)--(b1);
\draw (a2)--(b2);
\draw (a3)--(b3);
\draw (a4)--(b4);

\end{tikzpicture}
    \caption{The dominating set (\ref{dommm+1}) for the case $m=5$.}
    \label{fig:dommm+1}
\end{figure}

Next, we consider the case (C4).

\begin{lemma}
\label{5mn}
    For odd $m\geq 5$ and a positive integer $n\geq m$ such that $n\neq m+1$, we have
    \begin{equation*}
        \gamma(F_2(K_{m,n}))\leq \frac{m-1}{2}+n.
    \end{equation*}
\end{lemma}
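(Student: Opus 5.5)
We give an explicit dominating set of size $\frac{m-1}{2}+n$. Since the vertices of $F_2(K_{m,n})$ are $2$-subsets of $A\cup B$, we view a candidate set $S\subseteq\mathcal{X}$ as a graph $H$ on the vertex set $A\cup B$. Its edges are of three types: $AA$-edges, $BB$-edges and $AB$-edges. Two vertices $\{x,z\}$ and $\{y,z\}$ of $F_2(K_{m,n})$ are adjacent exactly when $x,y$ lie on opposite sides of the bipartition. So a pair $p\notin S$ is dominated iff one of its elements has an $H$-edge to some vertex on the side opposite to the other element of $p$. This gives the following conditions. For an $AA$ pair $\{a,a'\}$, it suffices that $a$ or $a'$ has a neighbour in $B$ in $H$. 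For a $BB$ pair $\{b,b'\}$, it suffices that $b$ or $b'$ has a neighbour in $A$ in $H$. For an $AB$ pair $\{a,b\}$, it suffices that $a$ lies on an $AA$-edge of $H$ or $b$ lies on a $BB$-edge of $H$.

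The plan is to use no $BB$-edges and to cover all of $A$ by $AA$-edges, which handles every $AB$ pair at once. Since $m$ is odd, this costs $\frac{m+1}{2}$ edges:
\begin{equation*}
\{a_1,a_2\},\{a_3,a_4\},\dots,\{a_{m-2},a_{m-1}\},\{a_{m-1},a_m\}.
\end{equation*}
It then remains to satisfy the $AA$ and $BB$ conditions, and these allow one exceptional vertex on each side. All $a_i$ except $a_m$ should have a $B$-neighbour, and all $b_j$ except $b_n$ should have an $A$-neighbour. We achieve this with $n-1$ $AB$-edges:
\begin{equation*}
\{a_i,b_i\}\ (1\le i\le m-1),\qquad \{a_1,b_j\}\ (m\le j\le n-1).
\end{equation*}
The second family is empty when $n=m$. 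This is possible because $n-1\ge m-1$. The total size is $\frac{m+1}{2}+n-1=\frac{m-1}{2}+n$.

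The verification then runs through the three types of pairs. An $AB$ pair $\{a_i,b_j\}$ is dominated because $a_i$ lies on an $AA$-edge and $b_j$ is on the opposite side of $a_i$'s $AA$-partner. An $AA$ pair $\{a_i,a_{i'}\}$ has at least one element different from $a_m$, say $a_i$. Then $\{a_i,a_{i'}\}$ is adjacent to $\{a_i,b_i\}\in S$, since $b_i$ and $a_{i'}$ are adjacent in $K_{m,n}$. A $BB$ pair $\{b_j,b_{j'}\}$ has an element other than $b_n$, say $b_j$. That element has an $A$-neighbour $a$ in $H$, and $\{a,b_j\}\in S$ is adjacent to $\{b_{j'},b_j\}$.

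This construction appears to work for all odd $m\ge 3$ and $n\ge m$, including $n=m+1$. The hypotheses $m\ge 5$ and $n\ne m+1$ are presumably needed only for the matching lower bound, and Lemma \ref{c3upper} gives a smaller set when $n=m+1$. The main care point is the bookkeeping for the two exceptional vertices $a_m$ and $b_n$. One must confirm that $\{a_{m-1},a_m\}$ still leaves every $AA$ pair containing $a_m$ dominated via the other element. One must also confirm that the $BB$ pairs containing $b_n$ are dominated through the other element. Both checks are immediate from the rule above.
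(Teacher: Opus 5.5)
Your proof is correct and is essentially the paper's construction. The paper uses the same $AA$-pairs $\{a_1,a_2\},\dots,\{a_{m-2},a_{m-1}\},\{a_{m-1},a_m\}$ and the same pairs $\{a_i,b_i\}$ ($1\le i\le m-1$), differing only in attaching $b_m,\dots,b_{n-1}$ to $a_m$ rather than $a_1$ and in treating $n=m$ and $n\ge m+2$ as separate cases, and your explicit domination check, which the paper omits, is sound.
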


\begin{proof}
    Assume that $n=m$. The set 
    \begin{equation}
    \label{dom5n1}
        \bigcup_{i=1}^{(m-1)/2}\{\{a_{2i-1},a_{2i}\}\} \cup \bigcup_{j=1}^{m-1} \{\{a_j,b_j\}\}\cup \{\{a_{m-1},a_{m}\}\}
    \end{equation}
    is a dominating set of $F_2(K_{m,m})$ that has size $\dfrac{3m-1}{2}$. 

    Assume that $n\geq m+2$. The set
    \begin{equation}
    \label{dom5n2}
        \bigcup_{j=1}^{(m-1)/2}\{\{a_{2j-1},a_{2j}\}\}\cup \bigcup_{h=1}^{m-1} \{\{a_h,b_h\}\}\cup \bigcup_{i=m}^{n-1} \{\{a_m,b_i\}\}\cup\{\{a_{m-1},a_{m}\}\}
    \end{equation}
    is a dominating set of $F_2(K_{m,n})$ that has size $\dfrac{m-1}{2}+n$. The result follows.
\end{proof}

We illustrate the sets (\ref{dom5n1}), (\ref{dom5n2}) in Figure \ref{fig:dom5}.

\begin{figure}[!h]
    \centering
        \begin{tikzpicture}[
    every node/.style={circle, fill=black, inner sep=2.5pt},
    >=stealth
]

\node (a11) at (0,2.8) {};
\node (a12) at (0,2.0) {};
\node[draw=none, fill=none, shape=rectangle] (vdots1) at (0, 1.2) {$\vdots$};
\node (a1m-4) at (0,0.4) {};
\node (a1m-3) at (0,-0.4) {};
\node (a1m-2) at (0,-1.2) {};
\node (a1m-1) at (0,-2) {};
\node (a1m) at (0,-2.8) {};

\node (a21) at (4.5,2.8) {};
\node (a22) at (4.5,2.0) {};
\node[draw=none, fill=none, shape=rectangle] (vdots2) at (4.5, 1.2) {$\vdots$};
\node (a2m-4) at (4.5,0.4) {};
\node (a2m-3) at (4.5,-0.4) {};
\node (a2m-2) at (4.5,-1.2) {};
\node (a2m-1) at (4.5,-2) {};
\node (a2m) at (4.5,-2.8) {};

\node (b11) at (1.8,2.8) {};
\node (b12) at (1.8, 2.0) {};
\node[draw=none, fill=none, shape=rectangle] (vdots3) at (1.8, 1.2) {$\vdots$};
\node (b1m-4) at (1.8,0.4) {};
\node (b1m-3) at (1.8,-0.4) {};
\node (b1m-2) at (1.8,-1.2) {};
\node (b1m-1) at (1.8,-2) {};
\node (b1m) at (1.8,-2.8) {};

\node (b21) at (6.3,2.8) {};
\node (b22) at (6.3, 2.0) {};
\node[draw=none, fill=none, shape=rectangle] (vdots4) at (6.3, 1.2) {$\vdots$};
\node (b2m-4) at (6.3,0.4) {};
\node (b2m-3) at (6.3,-0.4) {};
\node (b2m-2) at (6.3,-1.2) {};
\node (b2m-1) at (6.3,-2) {};
\node (b2m) at (6.3,-2.8) {};
\node[draw=none, fill=none, shape=rectangle] (vdots5) at (6.3, -3.6) {$\vdots$};
\node (b2n-1) at (6.3,-4.4) {};
\node (b2n) at (6.3,-5.2) {};

\node[fill=none, draw=none, inner sep=0pt, left=5pt]  at (a11) {$a_1$};
\node[fill=none, draw=none, inner sep=0pt, left=5pt]  at (a12) {$a_2$};
\node[fill=none, draw=none, inner sep=0pt, left=5pt]  at (a1m-4) {$a_{m-4}$};
\node[fill=none, draw=none, inner sep=0pt, left=5pt]  at (a1m-3) {$a_{m-3}$};
\node[fill=none, draw=none, inner sep=0pt, left=5pt]  at (a1m-2) {$a_{m-2}$};
\node[fill=none, draw=none, inner sep=0pt, left=5pt]  at (a1m-1) {$a_{m-1}$};
\node[fill=none, draw=none, inner sep=0pt, left=5pt]  at (a1m) {$a_m$};

\node[fill=none, draw=none, inner sep=0pt, left=5pt]  at (a21) {$a_1$};
\node[fill=none, draw=none, inner sep=0pt, left=5pt]  at (a22) {$a_2$};
\node[fill=none, draw=none, inner sep=0pt, left=5pt]  at (a2m-4) {$a_{m-4}$};
\node[fill=none, draw=none, inner sep=0pt, left=5pt]  at (a2m-3) {$a_{m-3}$};
\node[fill=none, draw=none, inner sep=0pt, left=5pt]  at (a2m-2) {$a_{m-2}$};
\node[fill=none, draw=none, inner sep=0pt, left=5pt]  at (a2m-1) {$a_{m-1}$};
\node[fill=none, draw=none, inner sep=0pt, left=5pt]  at (a2m) {$a_m$};

\node[fill=none, draw=none, inner sep=0pt, right=5pt] at (b11) {$b_1$};
\node[fill=none, draw=none, inner sep=0pt, right=5pt] at (b12) {$b_2$};
\node[fill=none, draw=none, inner sep=0pt, right=5pt] at (b1m-4) {$b_{m-4}$};
\node[fill=none, draw=none, inner sep=0pt, right=5pt] at (b1m-3) {$b_{m-3}$};
\node[fill=none, draw=none, inner sep=0pt, right=5pt] at (b1m-2) {$b_{m-2}$};
\node[fill=none, draw=none, inner sep=0pt, right=5pt] at (b1m-1) {$b_{m-1}$};
\node[fill=none, draw=none, inner sep=0pt, right=5pt] at (b1m) {$b_m$};

\node[fill=none, draw=none, inner sep=0pt, right=5pt] at (b21) {$b_1$};
\node[fill=none, draw=none, inner sep=0pt, right=5pt] at (b22) {$b_2$};
\node[fill=none, draw=none, inner sep=0pt, right=5pt] at (b2m-4) {$b_{m-4}$};
\node[fill=none, draw=none, inner sep=0pt, right=5pt] at (b2m-3) {$b_{m-3}$};
\node[fill=none, draw=none, inner sep=0pt, right=5pt] at (b2m-2) {$b_{m-2}$};
\node[fill=none, draw=none, inner sep=0pt, right=5pt] at (b2m-1) {$b_{m-1}$};
\node[fill=none, draw=none, inner sep=0pt, right=5pt] at (b2m) {$b_m$};
\node[fill=none, draw=none, inner sep=0pt, right=5pt] at (b2n-1) {$b_{n-1}$};
\node[fill=none, draw=none, inner sep=0pt, right=5pt] at (b2n) {$b_n$};

\draw (a11) .. controls (-0.1,2.5) and (-0.1,2.3) .. (a12);
\draw (a1m-2) .. controls (-0.1,-1.5) and (-0.1,-1.7) .. (a1m-1);
\draw (a1m-4) .. controls (-0.1,0.1) and (-0.1,-0.1) .. (a1m-3);
\draw (a1m-1) .. controls (-0.1,-2.3) and (-0.1,-2.5) .. (a1m);
\draw (a11)--(b11);
\draw (a12)--(b12);
\draw (a1m-4)--(b1m-4);
\draw (a1m-3)--(b1m-3);
\draw (a1m-2)--(b1m-2);
\draw (a1m-1)--(b1m-1);

\draw (a21) .. controls (4.4,2.5) and (4.4,2.3) .. (a22);
\draw (a2m-2) .. controls (4.4,-1.5) and (4.4,-1.7) .. (a2m-1);
\draw (a2m-4) .. controls (4.4,0.1) and (4.4,-0.1) .. (a2m-3);
\draw (a2m-1) .. controls (4.4,-2.3) and (4.4,-2.5) .. (a2m);
\draw (a21)--(b21);
\draw (a22)--(b22);
\draw (a2m-4)--(b2m-4);
\draw (a2m-3)--(b2m-3);
\draw (a2m-2)--(b2m-2);
\draw (a2m-1)--(b2m-1);
\draw (a2m)--(b2m);
\draw (a2m)--(vdots5);
\draw (a2m)--(b2n-1);

\end{tikzpicture}
    \caption{The dominating sets (\ref{dom5n1}), (\ref{dom5n2}) respectively.}
    \label{fig:dom5}
\end{figure}
\newpage

\section{A lower bound of $\gamma(F_2(K_{m,n}))$}
\label{F2}
We continue to work with $G=K_{m,n}$ where $2\leq m\leq n$. In this section we present a lower bound on the domination number of $F_2(K_{m,n})$.

\begin{definition}
    Let $2\leq m\leq n$. For $1\leq i\leq m$ and a subset $S\subseteq \mathcal{X}$, define
    \begin{align}
        &\sigma_i(S)=\vert \{b\in B\mid \{a_i,b\}\in S\}\vert, &s(S)=\vert \{i\mid \sigma_i(S)=0\}\vert,\label{sigmadef}\\
        &\pi_i(S)=\vert \{a\in A\mid \{a_i,a\}\in S\}\vert, &p(S)=m-s(S)-\vert \{i\mid \pi_i(S)>0\}\vert.\label{pidef}
    \end{align}

    For $1\leq j\leq n$ and a subset $S\subseteq \mathcal{X}$, define
    \begin{align}
        &\tau_j(S)=\vert \{a\in A\mid \{a,b_j\}\in S\}\vert, &t(S)=\vert \{j\mid \tau_j(S)=0\}\vert,\label{taudef}\\
        &\rho_j(S)=\vert \{b\in B\mid \{b,b_j\}\in S\}\vert, &r(S)=n-t(S)-\vert \{j\mid \rho_j(S)>0\}\vert.\label{rhodef}
    \end{align}

    If the context is clear, we abbreviate $\sigma_i=\sigma_i(S)$, $s=s(S)$, $\pi_i=\pi_i(S)$, $p=p(S)$, $\tau_j=\tau_j(S)$, $t=t(S)$, $\rho_j=\rho_j(S)$, $r=r(S)$.
\end{definition}

Next we present some useful lemmas.

\begin{lemma}
\label{important}
    Let $2\leq m\leq n$. Let $S$ denote a dominating set of $F_2(K_{m,n})$. For each ordered pair $(i,j)$ ($1\leq i\leq m$, $1\leq j\leq n$), at least one of the following (i)--(iii) hold:
    \begin{enumerate}[label=(\roman*)]
        \item $\{a_i,b_j\}\in S$;
        \item $\pi_i>0$;
        \item $\rho_j>0$.
    \end{enumerate}
\end{lemma}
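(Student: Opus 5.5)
The plan is to fix the pair $(i,j)$ and look at the vertex $\chi=\{a_i,b_j\}\in\mathcal{X}$, which lies in $\mathcal{C}_1$. Because $S$ is a dominating set of $F_2(K_{m,n})$, either $\chi\in S$ or $\chi$ has a neighbor in $S$. If $\chi\in S$, condition (i) holds and we are done. So assume $\chi\notin S$ and list the neighbors of $\chi$ explicitly.

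By Lemma \ref{threelemmas}, with $k=2$ and $\ell=1$, the neighbors of $\chi$ lie in $\mathcal{C}_0\cup\mathcal{C}_2$. A neighbor is obtained by moving one token of $\chi$ along an edge of $K_{m,n}$ to an unoccupied vertex. The token at $a_i$ can only move to some $b\in B\setminus\{b_j\}$, which gives $\{b,b_j\}\in\mathcal{C}_0$. The token at $b_j$ can only move to some $a\in A\setminus\{a_i\}$, which gives $\{a_i,a\}\in\mathcal{C}_2$. Concretely, the symmetric difference of $\chi$ with $\{b,b_j\}$ is $\{a_i,b\}$, an edge, and similarly for $\{a_i,a\}$. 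Conversely, any vertex adjacent to $\chi$ must share exactly one element with $\chi$, and the swapped pair must form an edge. So the neighborhood of $\chi$ is exactly
\[
\{\{b,b_j\}\mid b\in B\setminus\{b_j\}\}\cup\{\{a_i,a\}\mid a\in A\setminus\{a_i\}\}.
\]

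Hence some neighbor of $\chi$ lies in $S$. If it has the form $\{a_i,a\}$, then $\pi_i>0$ by (\ref{pidef}), which is condition (ii). If it has the form $\{b,b_j\}$, then $\rho_j>0$ by (\ref{rhodef}), which is condition (iii).

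There is no real obstacle here. The only care needed is the verification that the neighborhood is exactly as described, which is a short check against the definition of adjacency in the token graph.
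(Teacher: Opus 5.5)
Your proof is correct and takes the same approach as the paper: if $\{a_i,b_j\}\notin S$, it must have a neighbor in $S$, and every neighbor has the form $\{a_i,a\}\in\mathcal{C}_2$ or $\{b,b_j\}\in\mathcal{C}_0$, which gives (ii) or (iii). Your explicit description of the neighborhood of $\{a_i,b_j\}$ supplies the detail that the paper's one-line argument leaves implicit.
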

\begin{proof}
    Pick a pair $i,j$ that does not satisfy (i). Then $\{a_i,b_j\}\in N[S]\setminus S$. Since $\{a_i,b_j\}$ is adjacent to some vertex in $\mathcal{C}_0\cup \mathcal{C}_2$, at least one of (ii), (iii) holds. The result follows.
\end{proof}

\begin{lemma}
    \label{pr<2}
    Let $2\leq m\leq n$. Let $S$ denote a dominating set of $F_2(K_{m,n})$. Then there exists a dominating set $S'$ of $F_2(K_{m,n})$ such that $\vert S'\vert\leq \vert S\vert$ and satisfies at least one of the following (i), (ii):
    \begin{enumerate}[label=(\roman*)]
        \item $p(S')\leq 1$;
        \item $r(S')\leq 1$.
    \end{enumerate}
\end{lemma}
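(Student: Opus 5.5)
The plan is a replacement (exchange) argument. While both $p(S)\geq 2$ and $r(S)\geq 2$, I modify $S$ without increasing its size so that $p$ and $r$ each drop by $2$. Since $p$ and $r$ are integers and the process strictly decreases them, it must stop, and it can only stop once $p\leq 1$ or $r\leq 1$.

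\textbf{Step 1: finding the indices.} Let
\[
I=\{i\mid \sigma_i>0,\ \pi_i=0\},\qquad J=\{j\mid \tau_j>0,\ \rho_j=0\}.
\]
The set $\{i\mid \sigma_i>0\}$ has $m-s$ elements, and $I$ is this set minus those $i$ with $\pi_i>0$. Removing those $i$ deletes at most $\vert\{i\mid \pi_i>0\}\vert$ elements, so by (\ref{pidef}) we get $\vert I\vert\geq m-s-\vert\{i\mid\pi_i>0\}\vert=p$. Likewise (\ref{rhodef}) gives $\vert J\vert\geq r$. Hence if $p,r\geq 2$, we can pick distinct $i,i'\in I$ and distinct $j,j'\in J$. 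For each pair among these, conditions (ii) and (iii) of Lemma \ref{important} fail. So Lemma \ref{important} forces all four vertices $\{a_i,b_j\},\{a_i,b_{j'}\},\{a_{i'},b_j\},\{a_{i'},b_{j'}\}$ to lie in $S$.

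\textbf{Step 2: the exchange.} Define
\[
S'=\bigl(S\setminus\{\{a_i,b_{j'}\},\{a_{i'},b_j\}\}\bigr)\cup\{\{a_i,a_{i'}\},\{b_j,b_{j'}\}\},
\]
so $\vert S'\vert\leq\vert S\vert$. I verify that $S'$ dominates, using the fact that every neighbour of a vertex of $\mathcal{C}_1$ lies in $\mathcal{C}_0\cup\mathcal{C}_2$, and vice versa.

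\emph{Vertices $\{a_x,b_y\}\in\mathcal{C}_1$.} Only $\mathcal{C}_1$-vertices were removed, so any $\mathcal{C}_1$-vertex previously dominated from $\mathcal{C}_0\cup\mathcal{C}_2$ remains dominated. A $\mathcal{C}_1$-vertex previously dominated only by lying in $S$ is also fine unless it is one of the two removed vertices. Those two are handled by the additions: $\{a_i,a_{i'}\}$ is adjacent to every $\{a_i,b\}$ and every $\{a_{i'},b\}$, since the symmetric difference is an edge $a\,b$.

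\emph{Vertices in $\mathcal{C}_0\cup\mathcal{C}_2$.} Such a vertex is dominated by a $\mathcal{C}_1$-vertex whenever one of its two elements lies in some member of $S'\cap\mathcal{C}_1$. For example, $\{a_x,a_z\}$ is adjacent to $\{a_x,b\}$ for every $b$. The only changes to the $\sigma_i,\tau_j$ are at $a_i,a_{i'},b_j,b_{j'}$. These still have positive $\sigma$ or $\tau$, because $\{a_i,b_j\}$ and $\{a_{i'},b_{j'}\}$ are kept. All other $\sigma$, $\tau$ are unchanged.

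Any $\mathcal{C}_0\cup\mathcal{C}_2$-vertex that was dominated by being in $S$ is still in $S'$.

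\textbf{Step 3: the effect on $p$ and $r$.} In $S'$, the values $\sigma$ and $\tau$ of $a_i,a_{i'},b_j,b_{j'}$ remain positive, while $\pi_i,\pi_{i'},\rho_j,\rho_{j'}$ become positive and no other values change. Computing $s$ and $p$ from (\ref{sigmadef}) and (\ref{pidef}), $s$ is unchanged while $\vert\{i\mid\pi_i>0\}\vert$ increases by exactly $2$, so $p$ decreases by $2$. By the same computation with (\ref{taudef}) and (\ref{rhodef}), $r$ decreases by $2$.

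Iterating Steps 1 to 3 terminates with a dominating set satisfying (i) or (ii).

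The main obstacle is Step 2, specifically making sure no $\mathcal{C}_0\cup\mathcal{C}_2$-vertex loses all of its dominators. This is why the exchange removes only the two ``cross'' pairs and keeps $\{a_i,b_j\}$ and $\{a_{i'},b_{j'}\}$, rather than removing all four pairs.
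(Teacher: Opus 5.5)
Your argument is correct and is essentially the paper's proof. Lemma \ref{important} forces $\{a_i,b_j\},\{a_i,b_{j'}\},\{a_{i'},b_j\},\{a_{i'},b_{j'}\}$ into $S$, the two ``cross'' vertices are deleted, and the exchange is iterated; the only difference is that the paper adds just $\{a_i,a_{i'}\}$ (its displayed $T$ contains a typo and should read $\cup\{\{a_1,a_2\}\}$), which already dominates both deleted vertices, so it gets $\vert T\vert<\vert S\vert$ with $p$ dropping by $2$ and $r$ unchanged, whereas your extra vertex $\{b_j,b_{j'}\}$ is redundant but harmless.
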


\begin{proof}
    If $S$ satisfies either (i) or (ii), then set $S'=S$. Assume otherwise. Without loss, assume that $\pi_1(S)=\pi_2(S)=\rho_1(S)=\rho_2(S)=0$. By Lemma \ref{important}, we have
    \begin{equation*}
        \{\{a_1,b_1\}, \{a_1,b_2\}, \{a_2,b_1\}, \{a_2,b_2\}\}\subseteq S.
    \end{equation*}
    Set
    \begin{equation*}
        T=S\setminus\{\{a_1,b_2\},\{a_2,b_1\}\}\cup \{a_1,b_1\}.
    \end{equation*}
    Observe that $\vert T\vert<\vert S\vert$, $p(T)=p(S)-2$, and $N[T]\supseteq N[S]$. Iterate the process above until $p(T)\leq 1$, and set $S'=T$. The result follows. 
\end{proof}

In view of Lemma \ref{pr<2}, from now on, we only consider dominating sets of $F_2(K_{m,n})$ that have $p\leq 1$ or $r\leq 1$.

\begin{lemma}
\label{st<3}
    Let $2\leq m\leq n$. Let $S$ denote a dominating set of $F_2(K_{m,n})$. Then there exists a dominating set $S'$ of $F_2(K_{m,n})$ that satisfies the following (i)--(iii):
    \begin{enumerate}[label=(\roman*)]
        \item $s(S')\leq 2$;
        \item $t(S')\leq 2$;
        \item $\vert S'\vert \leq \vert S\vert$.
    \end{enumerate}
\end{lemma}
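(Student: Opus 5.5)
The plan is to reduce $s$ and $t$ one step at a time by a local exchange that keeps the set dominating and does not change its size. The cases $s(S)\geq 3$ and $t(S)\geq 3$ are symmetric, with the roles of $A$ and $B$ swapped, so I describe the case $s(S)\geq 3$.

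\textbf{Forcing three elements into $S$.} Suppose $s(S)\geq 3$. Without loss, $\sigma_1=\sigma_2=\sigma_3=0$, so no element of $S$ has the form $\{a_i,b\}$ with $i\in\{1,2,3\}$ and $b\in B$. The vertex $\{a_1,a_2\}$ lies in $\mathcal{C}_2$. By Lemma \ref{threelemmas}, all its neighbours lie in $\mathcal{C}_1$, and they are exactly the vertices $\{a_1,b\}$ and $\{a_2,b\}$ with $b\in B$. None of these is in $S$, so domination forces $\{a_1,a_2\}\in S$. In the same way $\{a_1,a_3\}\in S$ and $\{a_2,a_3\}\in S$.

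\textbf{The exchange.} Set
\begin{equation*}
T=\bigl(S\setminus\{\{a_1,a_3\}\}\bigr)\cup\{\{a_3,b_1\}\}.
\end{equation*}
Then $\vert T\vert\leq\vert S\vert$. To see that $T$ is dominating, it suffices to check that the removed vertex and its neighbours are still in $N[T]$.
\begin{enumerate}[label=(\alph*)]
\item The vertex $\{a_1,a_3\}$ is adjacent to $\{a_3,b_1\}\in T$.
\item Its neighbours are the vertices $\{a_1,b\}$ and $\{a_3,b\}$ with $b\in B$. Each $\{a_1,b\}$ is adjacent to $\{a_1,a_2\}\in T$, and each $\{a_3,b\}$ is adjacent to $\{a_2,a_3\}\in T$.
\end{enumerate}
Hence $N[T]=\mathcal{X}$. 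Moreover $\sigma_3(T)=1$ and all other $\sigma_i$ are unchanged, so $s(T)=s(S)-1$. Also $\tau_1(T)\geq\tau_1(S)$ and all other $\tau_j$ are unchanged, so $t(T)\leq t(S)$.

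\textbf{Iteration.} Repeat the exchange while $s\geq 3$. Then apply the symmetric exchange while $t\geq 3$: with $\tau_1=\tau_2=\tau_3=0$, the vertices $\{b_1,b_2\}$, $\{b_1,b_3\}$, $\{b_2,b_3\}$ are forced into the set, and we replace $\{b_1,b_3\}$ by $\{a_1,b_3\}$. This step raises $\sigma_1$ and never lowers any $\sigma_i$, so it cannot increase $s$. Each exchange lowers $s+t$ by one, never increases the size, and preserves domination. The process therefore terminates at a set $S'$ satisfying (i)--(iii).

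The only delicate point is choosing the swap so that nothing previously dominated only by $\{a_1,a_3\}$ is lost. The choice above works because the two remaining forced pairs $\{a_1,a_2\}$ and $\{a_2,a_3\}$ already cover every neighbour of $\{a_1,a_3\}$. Note that this step uses $s\geq 3$ and not merely $s\geq 2$.
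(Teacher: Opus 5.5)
Your proof is correct and is essentially the paper's argument: the paper does the same exchange on the $B$ side, using the three forced vertices $\{b_h,b_j\},\{b_h,b_\ell\},\{b_j,b_\ell\}$ and replacing $\{b_h,b_j\}$ by $\{a_1,b_h\}$, then invokes symmetry for $s$, which mirrors your argument with $A$ and $B$ swapped. One small imprecision is that each exchange lowers $s+t$ by \emph{at least} one, not exactly one (your $s$-step also lowers $t$ when $\tau_1$ was $0$), but this still gives termination.
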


\begin{proof}
    To avoid triviality, assume that $3\leq n$. We first show that there exists $S'$ that satisfies (ii) and (iii). If $S$ satisfies (ii) then set $S'=S$. Assume otherwise. Pick $h,j,\ell$ such that $\tau_{h}(S)=\tau_{j}(S)=\tau_{\ell}(S)=0$. Observe that
    \begin{equation*}
        \{\{b_h,b_j\}, \{b_h,b_\ell\}, \{b_j,b_\ell\}\}\subseteq S.
    \end{equation*}
    Set 
    \begin{equation*}
        T=S\setminus \{\{b_h,b_j\}\}\cup \{\{a_1,b_h\}\}.
    \end{equation*}
    Observe that 
    \begin{equation*}
        s(T)\leq s(S), \qquad \qquad t(T)=t(S)-1, \qquad \qquad N[T]\supseteq N[S], \qquad \qquad \vert T\vert \leq \vert S\vert.
    \end{equation*}

    Iterate the process above until $t(T)\leq 2$, and set $S'=T$. Then $S'$ satisfies (ii) and (iii). The result follows by symmetry.
\end{proof}

In view of Lemma \ref{st<3}, from now on, we only consider dominating sets of $F_2(K_{m,n})$ that have $s,t\leq 2$.

\begin{lemma}
    \label{C2lem}
    Let $2\leq m\leq n$. Let $S$ denote a dominating set of $F_2(K_{m,n})$. Then 
    \begin{equation}
        \label{C2ineq}
        \vert S\cap \mathcal{C}_2\vert\geq \left\lfloor \frac{s}{2}\right\rfloor+\left\lceil \frac{m-p-s}{2}\right\rceil. 
    \end{equation}
\end{lemma}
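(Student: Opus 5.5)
The plan is to read $S\cap\mathcal{C}_2$ as a graph $H$ on the vertex set $A$. Each element $\{a_i,a_{i'}\}\in S$ becomes an edge $a_ia_{i'}$ of $H$, so $\pi_i$ is the degree of $a_i$ in $H$. I then count the edges of $H$ in two disjoint groups, one for each term on the right of (\ref{C2ineq}). Let $I=\{i\mid\sigma_i=0\}$, so $|I|=s$. Let $J$ be the set of indices with $\sigma_i>0$ and $\pi_i>0$.

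I will use $|J|=m-p-s$, which holds if $p$ counts the indices with $\sigma_i>0$ and $\pi_i=0$. This reading of (\ref{pidef}) is exact when no $i\in I$ has $\pi_i>0$. If some $i\in I$ has $\pi_i>0$, the literal formula (\ref{pidef}) gives a larger value of $m-p-s$ than $|J|$. I expect this bookkeeping to be the main obstacle. For example, when $s=2$ the literal count gives $m-p-s=|J|+2$, and the argument below falls one short. So I would first check that the intended meaning of $p$ matches $|J|=m-p-s$, and adjust the count if it does not.

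First I show that every pair of indices in $I$ spans an edge of $H$. Take distinct $i,i'\in I$ and consider the vertex $\{a_i,a_{i'}\}\in\mathcal{C}_2$. By Lemma~\ref{threelemmas}, all its neighbours lie in $\mathcal{C}_1$ and have the form $\{a_i,b\}$ or $\{a_{i'},b\}$. None of these lies in $S$, because $\sigma_i=\sigma_{i'}=0$. Since $S$ is dominating, $\{a_i,a_{i'}\}$ must itself lie in $S$. Hence $H$ contains a complete graph on $\{a_i\mid i\in I\}$, which gives $\binom{s}{2}$ edges. One checks directly that $\binom{s}{2}\ge\lfloor s/2\rfloor$ for every $s\ge 0$: both sides are $0$ for $s\le1$, both are $1$ for $s=2$, and the left side is larger for $s\ge3$.

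Next I count the edges of $H$ that have at least one endpoint indexed by $J$. Since $J\cap I=\emptyset$, none of these edges lies inside $I$, so they are disjoint from the edges counted above. Every $i\in J$ has $\pi_i>0$, so each $a_i$ with $i\in J$ is an endpoint of at least one such edge. Each edge has two endpoints, so it meets at most two vertices of $J$. Therefore this group contains at least $\lceil |J|/2\rceil=\lceil (m-p-s)/2\rceil$ edges.

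Adding the two disjoint groups gives $|S\cap\mathcal{C}_2|\ge\lfloor s/2\rfloor+\lceil (m-p-s)/2\rceil$, which is (\ref{C2ineq}).
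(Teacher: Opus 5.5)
Your proof is correct and is essentially the paper's. The paper likewise shows that two indices with $\sigma_i=\sigma_{i'}=0$ force $\{a_i,a_{i'}\}\in S$, and then counts the edges needed to cover the indices with $\sigma_i>0$ and $\pi_i>0$; it writes this out only for $s=2$ under the standing reduction $s\le 2$ from Lemma~\ref{st<3}, whereas your clique bound $\binom{s}{2}\ge\lfloor s/2\rfloor$ handles every $s$.

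On your worry about $p$: reading it as the number of $i$ with $\sigma_i>0$ and $\pi_i=0$ is how the paper actually uses it in Lemmas~\ref{pr<2} and~\ref{C1lem}. It is also the only reading under which the lemma holds, so no adjustment of your count is needed. With the literal formula (\ref{pidef}) the lemma is false: for $m=2$ and $S=\{\{a_1,a_2\}\}\cup\mathcal{C}_0$ one gets $s=2$, $p=-2$ and right-hand side $2$, while $\vert S\cap\mathcal{C}_2\vert=1$.
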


\begin{proof}
    Assume that $s=2$. Without loss, assume that $\sigma_{m-1}=\sigma_m=0$. If $\{a_{m-1},a_{m}\}\not\in S$, then observe that $\{a_{m-1},a_m\}\not\in N[S]$, which is a contradiction. Hence, $\{a_{m-1},a_{m}\}\in S$. The result follows from this fact and routine combinatorial counting. 
\end{proof}

\begin{lemma}
    \label{C1lem}
    Let $2\leq m\leq n$. Let $S$ denote a dominating set of $F_2(K_{m,n})$. Then 
    \begin{equation}
        \label{C1ineq}
        \vert S\cap \mathcal{C}_1\vert\geq \max\{m+p(r-1)-s,n+r(p-1)-t,m-s,n-t\}.
    \end{equation}
\end{lemma}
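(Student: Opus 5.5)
The plan is to count $\vert S\cap\mathcal{C}_1\vert$ through the $A$-endpoints and, symmetrically, through the $B$-endpoints, and to bound each of the four terms in the maximum separately. Every element of $S\cap\mathcal{C}_1$ has the form $\{a_i,b_j\}$ and so has exactly one endpoint in $A$. Hence
\begin{equation*}
\vert S\cap \mathcal{C}_1\vert=\sum_{i=1}^m \sigma_i=\sum_{j=1}^n\tau_j .
\end{equation*}
By (\ref{sigmadef}), exactly $m-s$ indices $i$ have $\sigma_i\geq 1$, so $\sum_i\sigma_i\geq m-s$. By the same argument with (\ref{taudef}), $\sum_j\tau_j\geq n-t$. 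This gives the last two terms of (\ref{C1ineq}).

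For the term $m+p(r-1)-s$, I would introduce two index sets:
\begin{equation*}
P=\{i\mid \sigma_i>0,\ \pi_i=0\},\qquad R=\{j\mid \tau_j>0,\ \rho_j=0\}.
\end{equation*}
Since $\vert\{i\mid\sigma_i>0,\pi_i>0\}\vert\leq \vert\{i\mid \pi_i>0\}\vert$, we get $\vert P\vert\geq m-s-\vert\{i\mid\pi_i>0\}\vert=p$. Likewise $\vert R\vert\geq r$. Now take $i\in P$ and $j\in R$. Then $\pi_i=0$ and $\rho_j=0$, so Lemma \ref{important} forces $\{a_i,b_j\}\in S$. Therefore $\sigma_i\geq \vert R\vert$ for every $i\in P$. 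Combining this with $\sigma_i\geq 1$ for the remaining $m-s-\vert P\vert$ indices with $\sigma_i>0$ gives
\begin{equation*}
\sum_i\sigma_i\geq \vert P\vert\max(\vert R\vert,1)+(m-s-\vert P\vert)=m-s+\vert P\vert\bigl(\max(\vert R\vert,1)-1\bigr).
\end{equation*}
If $r\geq 1$, then $\max(\vert R\vert,1)-1\geq r-1\geq 0$ and $\vert P\vert\geq p$, so the right side is at least $m-s+p(r-1)$. If $r=0$, then $p(r-1)\leq 0$ and the bound already follows from $m-s$.

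The term $n+r(p-1)-t$ follows by the symmetric argument, summing $\tau_j$ over $j$ and interchanging the roles of $A$ and $B$. Taking the maximum of the four bounds then yields (\ref{C1ineq}).

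The only delicate point is that $p$ and $r$ are defined by subtraction rather than as sizes of explicit sets. One therefore has to check that they are lower bounds for $\vert P\vert$ and $\vert R\vert$, which is the inequality shown above, and to handle the sign of $r-1$ separately, as done above.
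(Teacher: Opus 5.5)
Your argument is essentially the paper's. Lemma~\ref{important} forces the whole block $P\times R$ into $S$, and every other index with $\sigma_i>0$ contributes one more edge. (The paper's last display has the typo $+ps$ for $+pr$.) For $r\geq 1$ your computation is correct.

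The gap is the sign of $p$ and $r$. Your step ``if $r=0$ then $p(r-1)\leq 0$'' uses $p\geq 0$, and the case $r<0$ is not treated at all. With the definitions (\ref{pidef}), (\ref{rhodef}) as written, both situations occur, because the subtracted set $\{i\mid \pi_i>0\}$ may contain indices with $\sigma_i=0$. Suppose $s\geq 2$ and $\sigma_i=\sigma_{i'}=0$. Then the vertex $\{a_i,a_{i'}\}$ has no neighbour in $S$, so it must lie in $S$. Hence every index with $\sigma_i=0$ has $\pi_i>0$, and $p=\vert P\vert-s$, which is negative as soon as $\vert P\vert<s$. Similarly $r=\vert R\vert-t$ when $t\geq 2$.

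These cases cannot be filled in, because the inequality is false there. Take $m=n=2$ and $S=\{\{a_1,a_2\},\{b_1,b_2\}\}$.
\begin{itemize}
\item Each of these two vertices is adjacent to all four vertices of $\mathcal{C}_1$, so $S$ dominates $F_2(K_{2,2})$.
\item Here $s=t=2$ and $p=r=2-2-2=-2$.
\item So $m+p(r-1)-s=6$, whereas $\vert S\cap\mathcal{C}_1\vert=0$.
\end{itemize}
This $S$ even satisfies the standing reductions $s,t\leq 2$ and $p,r\leq 1$ of Lemmas~\ref{pr<2} and~\ref{st<3}, so the paper's context does not exclude it. The paper's own proof has the same hole: it tacitly reads $p$ as the number of indices with $\sigma_i>0,\ \pi_i=0$, and $r$ as the number with $\tau_j>0,\ \rho_j=0$.

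What your argument does establish is the lemma under the extra hypothesis $p,r\geq 0$, or equivalently with $p:=\vert P\vert$ and $r:=\vert R\vert$. Then $r=0$ gives $p(r-1)=-p\leq 0$, the symmetric argument handles $n+r(p-1)-t$, and the proof is complete. You should state that hypothesis (or redefinition) explicitly rather than claim the lemma as written.
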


\begin{proof}
    It is clear from the definitions of $s,t$ that
    \begin{equation*}
        \vert S\cap \mathcal{C}_1\vert\geq \max\{m-s,n-t\}.
    \end{equation*}
    It suffices to show that
    \begin{equation*}
        \vert S\cap \mathcal{C}_1\vert \geq \max\{m+p(r-1)-s,n+r(p-1)-t\}.
    \end{equation*}
    Consider a pair of elements $a_i\in A, b_j\in B$ such that $\sigma_i>0$, $\pi_i=0$, $\tau_j>0$, $\rho_j=0$. By Lemma \ref{important}, we have $\{a_{i},b_{j}\}\in S$. In addition, observe that at least $m-p-s$ elements in $A$ are contained in some vertex in $S\cap \mathcal{C}_1$, and at least $n-r-t$ elements in $B$ are contained in some vertex in $S\cap \mathcal{C}_1$. By combinatorial counting,
    \begin{equation*}
        \vert S\cap \mathcal{C}_1\vert \geq \max \{m-p-s,n-r-t\}+ps=\max\{m+p(r-1)-s,n+r(p-1)-t\}.
    \end{equation*}
    The result follows.
\end{proof}

\begin{lemma}
    \label{C0lem}
    Let $2\leq m\leq n$. Let $S$ denote a dominating set of $F_2(K_{m,n})$. Then 
    \begin{equation}
        \label{C0ineq}
        \vert S\cap \mathcal{C}_0\vert\geq \left\lfloor \frac{t}{2}\right\rfloor+\left\lceil\frac{n-r-t}{2}\right\rceil. 
    \end{equation}
\end{lemma}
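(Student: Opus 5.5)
This is the mirror image of Lemma~\ref{C2lem}, with the roles of $A$ and $B$ interchanged. The quantities $t,\tau_j,\rho_j,r$ play the roles of $s,\sigma_i,\pi_i,p$, and $\mathcal{C}_0$, the set of $2$-subsets of $B$, plays the role of $\mathcal{C}_2$. I will count the elements of $B$ that must be covered by vertices of $S\cap\mathcal{C}_0$, and then use that each such vertex covers at most two elements of $B$. Recall that we may assume $t\le 2$ by Lemma~\ref{st<3}.

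\textbf{Step 1.} Let $T_0=\{j\mid \tau_j=0\}$, so $\vert T_0\vert=t$. If $t=2$, say $\tau_{n-1}=\tau_n=0$, I claim $\{b_{n-1},b_n\}\in S$. Its neighbours in $F_2(K_{m,n})$ are the vertices $\{a,b_n\}$ and $\{a,b_{n-1}\}$ with $a\in A$, all lying in $\mathcal{C}_1$. None of these is in $S$, because $\tau_{n-1}=\tau_n=0$. So $\{b_{n-1},b_n\}$ can only be dominated by itself. This single vertex accounts for $\lfloor t/2\rfloor=1$. For $t\le 1$ the term $\lfloor t/2\rfloor$ is $0$ and nothing is needed.

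\textbf{Step 2.} By definition of $r$, the set $R=\{j\notin T_0\mid \rho_j>0\}$ has size $n-t-r$. Each $j\in R$ lies in some vertex $\{b_j,b\}\in S\cap\mathcal{C}_0$. Every vertex of $\mathcal{C}_0$ contains exactly two elements of $B$, so at least $\lceil (n-r-t)/2\rceil$ vertices of $S\cap\mathcal{C}_0$ are needed to cover the indices in $R$.

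\textbf{Step 3: the main obstacle.} The two counts must be combined without double counting. The vertex $\{b_{n-1},b_n\}$ from Step 1 uses only indices in $T_0$, which are disjoint from $R$. Hence it is not among the vertices counted in Step 2, and the two contributions add to give (\ref{C0ineq}). When $t=1$ there is a subtlety: a vertex $\{b_j,b_n\}$ with $j\in R$ and $n\in T_0$ covers only one index of $R$. This only helps the bound, since $\lceil\cdot\rceil$ is already the worst case of two indices of $R$ per vertex. I expect this disjointness bookkeeping, rather than any real difficulty, to be the only point needing care.
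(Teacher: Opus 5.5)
Your proof is correct and is the paper's argument. The paper's proof just refers back to Lemma~\ref{C2lem}: under the same standing assumption $t\le 2$, force $\{b_{n-1},b_n\}\in S$ when $t=2$ and then count, and your Steps 1--3 spell out that counting. One point should be stated explicitly: your Step 2 identity $\vert R\vert=n-t-r$ uses $r=\vert\{j\mid \tau_j>0,\ \rho_j=0\}\vert$, which is how the paper actually uses $r$ in Lemmas~\ref{pr<2} and \ref{C1lem}. With the formula as printed, $n-r-t=\vert\{j\mid\rho_j>0\}\vert$ also counts the indices $n-1,n$ of $T_0$, and the inequality then fails: for $m=n=3$, $S=\{\{b_2,b_3\},\{a_1,b_1\},\{a_2,b_1\},\{a_3,b_1\}\}$ is dominating with $t=2$ and right-hand side $2$, yet $\vert S\cap\mathcal{C}_0\vert=1$.
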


\begin{proof}
    Similar to (\ref{C2ineq}).
\end{proof}

\begin{lemma}
\label{biglemma}
    Let $2\leq m\leq n$. Let $S$ denote a dominating set of $F_2(K_{m,n})$. Then the following (\ref{bigineq1})--(\ref{bigineq4}) hold:
    \begin{align}
        \vert S\vert&\geq\left\lfloor \frac{s}{2}\right\rfloor+\left\lceil\frac{m-p-s}{2}\right\rceil+m+p(r-1)-s+\left\lfloor\frac{t}{2}\right\rfloor+\left\lceil\frac{n-r-t}{2}\right\rceil.\label{bigineq1}\\
        \vert S\vert&\geq\left\lfloor \frac{s}{2}\right\rfloor+\left\lceil\frac{m-p-s}{2}\right\rceil+n+r(p-1)-t+\left\lfloor\frac{t}{2}\right\rfloor+\left\lceil\frac{n-r-t}{2}\right\rceil.\label{bigineq2}\\
        \vert S\vert&\geq\left\lfloor \frac{s}{2}\right\rfloor+\left\lceil\frac{m-p-s}{2}\right\rceil+m-s+\left\lfloor\frac{t}{2}\right\rfloor+\left\lceil\frac{n-r-t}{2}\right\rceil.\label{bigineq3}\\
        \vert S\vert&\geq\left\lfloor \frac{s}{2}\right\rfloor+\left\lceil\frac{m-p-s}{2}\right\rceil+n-t+\left\lfloor\frac{t}{2}\right\rfloor+\left\lceil\frac{n-r-t}{2}\right\rceil.\label{bigineq4}
    \end{align}
\end{lemma}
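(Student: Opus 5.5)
The plan is to use the fact that, for $k=2$, the classes $\mathcal{C}_0,\mathcal{C}_1,\mathcal{C}_2$ partition $\mathcal{X}$, so that
\begin{equation*}
\vert S\vert=\vert S\cap \mathcal{C}_0\vert+\vert S\cap \mathcal{C}_1\vert+\vert S\cap \mathcal{C}_2\vert.
\end{equation*}
Each summand has already been bounded below: $\vert S\cap\mathcal{C}_2\vert$ by Lemma \ref{C2lem}, $\vert S\cap\mathcal{C}_0\vert$ by Lemma \ref{C0lem}, and $\vert S\cap\mathcal{C}_1\vert$ by Lemma \ref{C1lem}. Adding these bounds gives the result directly.

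First I would check that $\mathcal{C}_\ell$ is empty for $\ell\geq 3$ when $k=2$. Since $\min(m,2)=2$ (because $m\geq 2$), this holds, so the three sets above are disjoint and their union is $\mathcal{X}$. This justifies the displayed decomposition of $\vert S\vert$.

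Next I would substitute the bound (\ref{C2ineq}) for the $\mathcal{C}_2$ term and the bound (\ref{C0ineq}) for the $\mathcal{C}_0$ term. These two contributions are identical in all four inequalities (\ref{bigineq1})--(\ref{bigineq4}); they appear as the first two and the last two summands on each right-hand side.

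For the $\mathcal{C}_1$ term, (\ref{C1ineq}) says $\vert S\cap\mathcal{C}_1\vert$ is at least the maximum of four quantities, and hence at least each of them individually. The four quantities are $m+p(r-1)-s$, $n+r(p-1)-t$, $m-s$ and $n-t$. Choosing each one in turn and adding the $\mathcal{C}_0$ and $\mathcal{C}_2$ bounds yields (\ref{bigineq1}), (\ref{bigineq2}), (\ref{bigineq3}) and (\ref{bigineq4}) respectively.

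There is no real obstacle here, since the statement is an assembly of earlier lemmas. The only point needing care is that all three earlier lemmas are applied to the same dominating set $S$, with the same values of $s,t,p,r$. They are stated for an arbitrary dominating set, so this holds.
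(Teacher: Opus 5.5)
Your proposal is correct and matches the paper's argument: the paper's proof of this lemma is the one-line instruction to combine (\ref{C2ineq})--(\ref{C0ineq}). That amounts to summing the three class-wise bounds over the partition $\mathcal{X}=\mathcal{C}_0\cup\mathcal{C}_1\cup\mathcal{C}_2$ and using each of the four terms of the maximum in (\ref{C1ineq}) in turn, exactly as you do, with your explicit check that $k=2$ gives only these three classes making precise a step the paper leaves implicit.
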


\begin{proof}
    Combine (\ref{C2ineq})--(\ref{C0ineq}).
\end{proof}



We now consider the case (C1).

\begin{lemma}
    \label{yesC2}
    Let $n\geq 2$. For every dominating set $S$ of $F_2(K_{2,n})$, there exists a dominating set $S'$ of $F_2(K_{2,n})$ such that $ \mathcal{C}_2\subseteq S'$ and $\vert S'\vert\leq \vert S\vert$.
\end{lemma}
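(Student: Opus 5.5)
The plan is a single exchange: if $\{a_1,a_2\}\notin S$, swap one element of $S$ for $\{a_1,a_2\}$ (the unique element of $\mathcal{C}_2$ when $m=2$) without losing domination, and otherwise take $S'=S$. The starting observation comes from Lemma \ref{threelemmas}: for $k=2$, $m=2$ the vertex $\{a_1,a_2\}$ is adjacent to all $2n$ vertices of $\mathcal{C}_1$. Hence $N[\{a_1,a_2\}]=\mathcal{C}_1\cup\mathcal{C}_2$, and after adding $\{a_1,a_2\}$ only domination of $\mathcal{C}_0$ is at stake. A vertex $\{b_j,b_l\}\in\mathcal{C}_0$ is adjacent only to vertices of $\mathcal{C}_1$. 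So it is dominated exactly when it lies in $S$ or $\tau_j+\tau_l>0$. Since $\{a_1,a_2\}\notin S$ is itself dominated, $S\cap\mathcal{C}_1\neq\emptyset$.

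\textbf{Step 1 (easy removals).} Look for an element $x\in S$ whose removal leaves every vertex of $\mathcal{C}_0$ dominated, and set $S'=(S\setminus\{x\})\cup\{\{a_1,a_2\}\}$.
\begin{enumerate}[label=(\alph*)]
\item Take $x=\{a,b_j\}\in S$ with $\tau_j\geq 2$. Every $\tau$ stays positive after removal, so nothing in $\mathcal{C}_0$ is lost.
\item Take $x=\{b_j,b_l\}\in S\cap\mathcal{C}_0$ with $\tau_j+\tau_l>0$. Then $x$ stays dominated, and no other vertex of $\mathcal{C}_0$ relied on $x$.
\end{enumerate}

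\textbf{Step 2 (remaining configuration).} We may now assume $\tau_j\le 1$ for all $j$, and $\tau_j=\tau_l=0$ for every $\{b_j,b_l\}\in S\cap\mathcal{C}_0$. Let $U=\{j:\tau_j=0\}$. Every pair inside $U$ must lie in $S$, while no pair meeting $B\setminus U$ lies in $S$. If $U=\emptyset$, then removing any $\{a,b_j\}$ leaves only $j$ uncovered. This is harmless since every $\{b_j,b_l\}$ with $l\ne j$ still has $\tau_l=1$, so that swap works. If $|U|=1$, say $U=\{u\}$, pick $x=\{a,b_j\}\in S$. We replace $x$ not by $\{a_1,a_2\}$ alone, but use the fact that $\{a,b_j\}$ was dominating $\{b_j,b_u\}$. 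The repair I would try is a two-for-two exchange:
\begin{equation*}
S'=\bigl(S\setminus\{\{a,b_j\},\,y\}\bigr)\cup\bigl\{\{a_1,a_2\},\{a,b_u\}\bigr\},
\end{equation*}
for a suitable second element $y$. When $|U|\ge 2$ I would take $y=\{b_u,b_v\}$ with $u,v\in U$. The point is to redistribute coverage so that the $\tau$-uncovered set shrinks from $U$ to $\{j\}$, which is where Lemma \ref{importantxy} is relevant. If no suitable $y$ exists (as for $|U|=1$), a counting argument is needed instead: the pairs inside $U$ number $\binom{|U|}{2}$, which exceeds what the alternative configuration needs.

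\textbf{Main obstacle.} The hard case is the one just described: every $b$ is covered at most once and $U$ is a nonempty block whose pairs all lie in $S$. There a single removal genuinely fails. I expect the proof to need the exchange above, or an iteration of it, together with careful bookkeeping of which $\{b_j,b_l\}$ become undominated. In effect the argument has to show that a configuration with $U\neq\emptyset$ is never strictly cheaper than one containing $\{a_1,a_2\}$ with $|U|\le 1$.
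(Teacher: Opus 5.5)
Your Step 1 matches the paper's proof: case (a) is the paper's case $\{a_2,b_1\}\in S$, and case (b) is its case $\rho_1>0$. However, the proposal is incomplete. You leave the Step 2 configuration open, offering only a tentative two-for-two exchange and an unspecified counting argument, and you claim a single swap ``genuinely fails'' there. What you are missing is that this configuration \emph{cannot occur}.

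You used the domination requirement only for $\mathcal{C}_0$, which is all that matters for $S'$. But the original $S$ does not contain $\{a_1,a_2\}$, and it must still dominate every vertex of $\mathcal{C}_1$. Take $\{a_i,b_j\}\in S$, which exists as you noted, and let $a_{i'}$ be the other element of $A$. The only neighbors of $\{a_{i'},b_j\}$ are $\{a_1,a_2\}$ and the vertices $\{b_j,b_l\}$ with $l\neq j$. Since $\{a_1,a_2\}\notin S$, one of two things holds:
\begin{enumerate}[label=(\roman*)]
\item $\{a_{i'},b_j\}\in S$, so $\tau_j=2$ and Step 1(a) applies;
\item some $\{b_j,b_l\}\in S$, and since $\tau_j\geq 1$, Step 1(b) applies.
\end{enumerate}
This is just Lemma \ref{important} applied to the pair $(i',j)$, which is exactly how the paper argues.

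In the language of your Step 2: $\tau_j\geq 1$ forces $j\notin U$, while you showed that every element of $S\cap\mathcal{C}_0$ lies inside $U$. So $\{a_{i'},b_j\}$ would be undominated by $S$, a contradiction. The whole Step 2 case, including $U=\emptyset$ and $|U|=1$, is therefore vacuous, and no exchange or counting is needed there. Replace Step 2 with this one-line observation and your argument becomes the paper's proof.
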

\begin{proof}
    If $\mathcal{C}_2\subseteq S$, then set $S'=S$. Assume otherwise. By Lemma \ref{importantxy}, the set $S\cap \mathcal{C}_1$ is not empty. Without loss, assume that $\{a_1,b_1\}\in S$. 

    By Lemma \ref{important}, either (i) $\{a_2,b_1\}\in S$, or (ii) $\rho_1(S)>0$. In the case (i), set 
    \begin{equation*}
        S'=S\setminus \{\{a_1,b_1\}\}\cup \{\{a_1,a_2\}\}.
    \end{equation*}
    If $S$ satisfies case (ii) and not case (i), let $\{b_1,b\}\in S$ for some $b\in B\setminus \{b_1\}$. Set
    \begin{equation*}
        S'=S\setminus \{\{b_1,b\}\}\cup \{\{a_1,a_2\}\}.
    \end{equation*}
    It is routine to verify that $S'$ is a dominating set that satisfies the result of the lemma.
\end{proof}

\begin{lemma}
\label{noC0}
    Let $n\geq 2$. For every dominating set $S$ of $F_2(K_{2,n})$, there exists a dominating set $S'$ of $F_2(K_{2,n})$ such that $S'\cap \mathcal{C}_0=\emptyset$ and $\vert S'\vert\leq \vert S\vert$.
\end{lemma}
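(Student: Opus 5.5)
The plan is to first normalize $S$ using Lemma \ref{yesC2}, and then trade each element of $S\cap\mathcal{C}_0$ for an element of $\mathcal{C}_1$, one at a time, without increasing the size or losing domination.

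\textbf{Step 1: put $\{a_1,a_2\}$ into $S$.} By Lemma \ref{yesC2}, we may replace $S$ by a dominating set of no larger size with $\mathcal{C}_2=\{\{a_1,a_2\}\}\subseteq S$. So assume $\{a_1,a_2\}\in S$.

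\textbf{Step 2: record what $\{a_1,a_2\}$ dominates.} For any $i\in\{1,2\}$ and $b\in B$, the symmetric difference of $\{a_1,a_2\}$ and $\{a_i,b\}$ is $\{a_{3-i},b\}$. This pair is an edge of $K_{2,n}$. Hence $\{a_1,a_2\}$ dominates all of $\mathcal{C}_1$, and $\mathcal{C}_1\cup\mathcal{C}_2\subseteq N[\{\{a_1,a_2\}\}]$. Consequently, the only vertices whose domination depends on the other elements of $S$ are those in $\mathcal{C}_0$. A vertex $\{b_j,b_k\}\in\mathcal{C}_0$ is adjacent only to vertices of $\mathcal{C}_1$, namely $\{a_i,b_j\}$ and $\{a_i,b_k\}$ for $i=1,2$.

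\textbf{Step 3: the exchange.} Suppose $S\cap\mathcal{C}_0\neq\emptyset$, and pick $\{b_j,b_k\}\in S$. Set
\begin{equation*}
T=S\setminus\{\{b_j,b_k\}\}\cup\{\{a_1,b_j\}\}.
\end{equation*}
Then $\vert T\vert\leq\vert S\vert$, and $\{a_1,a_2\}\in T$, so by Step 2 all of $\mathcal{C}_1\cup\mathcal{C}_2$ lies in $N[T]$.

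It remains to check $\mathcal{C}_0$. The vertices dominated by the removed element $\{b_j,b_k\}$ are itself and some vertices of $\mathcal{C}_1$. The $\mathcal{C}_1$ vertices are still covered by $\{a_1,a_2\}$. The vertex $\{b_j,b_k\}$ itself is adjacent to the new element $\{a_1,b_j\}$. Every other vertex of $\mathcal{C}_0$ is dominated in $T$ exactly as it was in $S$, since its dominators in $\mathcal{C}_1$ are all still present. Thus $T$ is a dominating set with $\vert T\cap\mathcal{C}_0\vert=\vert S\cap\mathcal{C}_0\vert-1$.

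\textbf{Step 4: iterate.} Repeat Step 3 until $\mathcal{C}_0$ contains no element of the set, and take the result as $S'$.

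The only potential subtlety is making sure the exchange never removes a necessary dominator. This is exactly why Step 1 comes first: once $\{a_1,a_2\}$ is fixed in the set, the whole middle class $\mathcal{C}_1$ is permanently dominated. So I expect no real obstacle beyond verifying Step 2 carefully.
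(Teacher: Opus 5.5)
Your proof is correct and follows essentially the paper's argument. After using Lemma~\ref{yesC2} to put $\{a_1,a_2\}$ into $S$, the paper replaces every $\{b_i,b_j\}\in S$ by $\{a_1,b_i\}$ all at once, whereas you make the same exchange one element at a time, and your Step~3 supplies the verification the paper calls ``routine'': vertices of $\mathcal{C}_0$ are pairwise non-adjacent, so each exchange only affects the domination of the removed vertex itself.
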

\begin{proof}
    In view of Lemma \ref{yesC2}, assume without loss that $\{a_1,a_2\}\in S$. Let
    \begin{equation*}
        S'=(S\cap (\mathcal{C}_1\cup \mathcal{C}_2))\cup \bigcup_{\substack{1\leq i<j\leq n\\\{b_i,b_j\}\in S}}\{\{a_1,b_i\}\}.
    \end{equation*}
    It is routine to verify that $S'$ satisfies the result of the lemma.  
\end{proof}

\begin{lemma}
\label{evenlower}
    For even $m$ and $n\geq m$, we have
    \begin{equation*}
        \gamma(F_2(K_{m,n}))\geq \frac{m}{2}+n-1.
    \end{equation*}
\end{lemma}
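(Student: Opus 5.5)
We argue by cases on $m$. For $m=2$ the claimed bound is $n$. By Lemma \ref{yesC2} and Lemma \ref{noC0} we may assume that the dominating set $S$ satisfies $\{a_1,a_2\}\in S$ and $S\cap\mathcal{C}_0=\emptyset$. Every vertex $\{b_i,b_j\}\in\mathcal{C}_0$ must then be dominated by a vertex of $S\cap\mathcal{C}_1$ of the form $\{a,b_i\}$ or $\{a,b_j\}$. Hence at most one $b_j$ fails to lie in a vertex of $S\cap\mathcal{C}_1$, so $\vert S\cap\mathcal{C}_1\vert\geq n-1$. Adding $\{a_1,a_2\}$ gives $\vert S\vert\geq n=\frac{m}{2}+n-1$.

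For even $m\geq 4$ we take a minimum dominating set $S$. By Lemma \ref{pr<2} and Lemma \ref{st<3} we may assume $s,t\leq 2$ together with $p\leq 1$ or $r\leq 1$. (Strictly, we apply the two reductions in turn and check that the replacements in Lemma \ref{st<3} do not destroy $p\leq 1$ or $r\leq 1$. If this check fails, we use only the properties we actually need in each case.) We then bound $\vert S\vert$ from below using Lemma \ref{biglemma}, choosing for each parameter regime the most useful of (\ref{bigineq1})--(\ref{bigineq4}). Write $\alpha=\lfloor s/2\rfloor+\lceil (m-p-s)/2\rceil$ for the $\mathcal{C}_2$ contribution and $\beta=\lfloor t/2\rfloor+\lceil (n-r-t)/2\rceil$ for the $\mathcal{C}_0$ contribution.

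\emph{Case $p\leq 1$.} Since $m$ is even and $s\leq 2$, checking $s\in\{0,1,2\}$ gives $\alpha\geq \frac{m}{2}-1$, with equality only in a few configurations (such as $p=1$, $s=1$). We use (\ref{bigineq4}). Because $t\leq 2$, we have $n-t+\lfloor t/2\rfloor=n-\lceil t/2\rceil\geq n-1$, and $\lceil (n-r-t)/2\rceil\geq 1$ whenever $n-r-t\geq 1$. So the right side of (\ref{bigineq4}) is at least $\frac{m}{2}-1+n$. The boundary situations $n-r-t=0$, or $\alpha$ and the $B$-side both attaining their minima, are handled with (\ref{bigineq1}) or (\ref{bigineq2}). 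The terms $p(r-1)$ and $r(p-1)$ there are exactly what compensate: if $n-r-t=0$ then $r\geq n-2$ is large, and $r(p-1)$ can be traded against $p$.

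\emph{Case $r\leq 1$ (and $p$ possibly large).} Here $\alpha$ may be small, so we shift the weight onto $\mathcal{C}_0$ and $\mathcal{C}_1$. Combining (\ref{bigineq4}) with $t\leq 2$ gives $\vert S\cap\mathcal{C}_1\vert+\vert S\cap\mathcal{C}_0\vert\geq n-\lceil t/2\rceil+\lceil (n-1-t)/2\rceil$. Since $n\geq m$, this is already about $n-1+\frac{m}{2}$ up to rounding. Any rounding loss is recovered from $\alpha\geq 1$, which holds whenever $m-p-s\geq 1$, or from (\ref{bigineq2}) when $r=1$ and $p\geq 2$, where $r(p-1)\geq 1$.

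The main obstacle is the bookkeeping of floors and ceilings at the extreme values $s,t\in\{1,2\}$, $p,r\in\{0,1\}$. In each such corner we must check that one of (\ref{bigineq1})--(\ref{bigineq4}) reaches $\frac{m}{2}+n-1$; this is where the evenness of $m$ is used, since it makes $\lceil (m-p-s)/2\rceil$ large enough. If some corner (e.g.\ $n=m$, $t=2$, $r=1$) is not covered by the inequalities, we fall back on a direct argument in the style of Lemma \ref{importantxy} and Lemma \ref{important}, counting which $\{a_i,b_j\}$ are forced into $S$.
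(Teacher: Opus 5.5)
\textbf{Where your proposal matches the paper.} Your $m=2$ argument is the paper's. For $m\ge 4$ your plan is also the paper's: split on $p\le 1$ or $r\le 1$, then bound with (\ref{bigineq1})--(\ref{bigineq4}). Several of your hedges resolve cleanly.
\begin{itemize}
\item Since $S$ is minimum, the swap in Lemma \ref{pr<2} shows every minimum dominating set has $p\le 1$ or $r\le 1$. The set produced by Lemma \ref{st<3} is again minimum, so no compatibility check between the two reductions is needed.
\item In the case $p\le 1$, the only deficient corner of (\ref{bigineq4}) is $p=s=1$, $t\in\{1,2\}$, $r=n-t$. There (\ref{bigineq1}) gives $\frac{3m}{2}+n-4\ge \frac m2+n-1$. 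The term doing the work is $p(r-1)=r-1$, not $r(p-1)$, which vanishes for $p=1$. The paper instead rules this corner out directly with Lemma \ref{important}.
\item The case $r=1$, $p\ge 2$ is entirely settled by (\ref{bigineq2}), since $r(p-1)\ge 1$. So the corner you single out ($n=m$, $t=2$, $r=1$) is harmless.
\end{itemize}

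\textbf{The gap is at $r=0$.} None of the recoveries you list covers this corner. Take $n=m$, $t=2$, $r=0$, $s\le 1$ and $p=m-s$, so your $\alpha$ is $0$.
\begin{itemize}
\item (\ref{bigineq4}) gives only $n-1+\frac{m-2}{2}=\frac m2+n-2$.
\item Your recovery $\alpha\ge 1$ fails.
\item (\ref{bigineq2}) coincides with (\ref{bigineq4}) because $r(p-1)=0$.
\item (\ref{bigineq1}) is weaker because $p(r-1)=-p$.
\item Your fallback of counting vertices $\{a_i,b_j\}$ forced by Lemma \ref{important} gives nothing, since $r=0$ and $t=2$ mean $\rho_j>0$ for every $j$.
\end{itemize}

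\textbf{How to close it.} Use (\ref{bigineq3}). It gives $\vert S\cap\mathcal{C}_1\vert\ge m-s\ge m-1$. Also $\lfloor t/2\rfloor+\lceil (n-t)/2\rceil=\frac n2$ because $n=m$ is even. Together these give $\vert S\vert\ge \frac m2+m-1$. This is exactly the paper's subcase $p>1$, $r=0$, $m=n$. Note that the parity used here is that of $n$ in the $\mathcal{C}_0$ term, not the $\lceil (m-p-s)/2\rceil$ term you point to. For $n\ge m+1$ with $r=0$, substituting $r=0$ into (\ref{bigineq4}), rather than your weakened $n-1-t$, already suffices.

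Without this, your $m\ge 4$ argument is an outline whose stated fallback fails at exactly the one corner that needs a different inequality.
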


\begin{proof}
    Assume that $m=2$. Pick a dominating set $S$; in view of Lemmas \ref{yesC2}, \ref{noC0} we assume without loss that $\mathcal{C}_2\subseteq S$ and $S\cap \mathcal{C}_0=\emptyset$. By Lemma \ref{importantxy}, it is necessary that $t\leq 1$. Hence, $\vert S\cap \mathcal{C}_1\vert \geq n-1$. The result follows.
    
    We now assume that $m\geq 4$. Pick a dominating set $S$. In view of Lemma \ref{pr<2}, we consider the following cases: (i) $p=0$; (ii) $p=1$; (iii) $p>1$ and $r=0$; (iv) $p>1$ and $r=1$. 
    
    Assume that $p=0$. By (\ref{bigineq4}),
    \begin{equation*}
        \vert S\vert \geq\left\lfloor \frac{s}{2}\right\rfloor+\left\lceil\frac{m-s}{2}\right\rceil+n-t+\left\lfloor\frac{t}{2}\right\rfloor+\left\lceil\frac{n-r-t}{2}\right\rceil\geq \frac{m}{2}+n-1.
    \end{equation*}

    Assume that $p=1$. By (\ref{bigineq4}),
    \begin{equation}
    \label{p=1ineq}
        \vert S\vert\geq\left\lfloor \frac{s}{2}\right\rfloor+\left\lceil\frac{m-1-s}{2}\right\rceil+n-t+\left\lfloor\frac{t}{2}\right\rfloor+\left\lceil\frac{n-r-t}{2}\right\rceil \geq \frac{m}{2}+n-2.
    \end{equation}
    It is easy to verify that the equality cases for the last inequality of (\ref{p=1ineq}) are $(r,s,t)=(n-1,1,1), (n-2,1,2)$. Pick and assume an equality case. Since $s=1$, assume without loss that $\sigma_1=0$. Since $p=1$, we have $\pi_1=0$. Since $r\geq n-2\geq 2$, there exists $j$ $(1\leq j\leq n)$ such that the ordered pair $(1,j)$ violates Lemma \ref{important}. Hence, the last inequality of (\ref{p=1ineq}) is strict, and the result follows. 

    Assume that $p>1$ and $r=0$. There are two cases to consider: $m=n$, and $m\leq n-1$. Assume that $m=n$. By (\ref{bigineq3}),
    \begin{equation*}
        \vert S\vert\geq\left\lfloor \frac{s}{2}\right\rfloor+\left\lceil\frac{m-p-s}{2}\right\rceil+m-s+\left\lfloor\frac{t}{2}\right\rfloor+\left\lceil\frac{n-t}{2}\right\rceil\geq \frac{m}{2}+m-1.
    \end{equation*}
    The result follows. Now assume that $m\leq n-1$. By (\ref{bigineq4}),
    \begin{equation*}
        \vert S\vert\geq\left\lfloor \frac{s}{2}\right\rfloor+\left\lceil\frac{m-p-s}{2}\right\rceil+n-t+\left\lfloor\frac{t}{2}\right\rfloor+\left\lceil\frac{n-t}{2}\right\rceil\geq \frac{m}{2}+n-1.
    \end{equation*}

    Assume that $p>1$ and $r=1$. By (\ref{bigineq2}) and the assumption $4\leq m\leq n$,
    \begin{equation*}
        \vert S\vert\geq\left\lfloor \frac{s}{2}\right\rfloor+\left\lceil\frac{m+p-s}{2}\right\rceil+n-1-t+\left\lfloor\frac{t}{2}\right\rfloor+\left\lceil\frac{n-1-t}{2}\right\rceil\geq \frac{m}{2}+n-1.
    \end{equation*}
    The result follows.
\end{proof}

Next we consider the case (C2). 

\begin{lemma}
\label{dom3nlower}
    For $n\geq 3$ we have
    \begin{equation*}
        \gamma(F_2(K_{3,n}))\geq n.
    \end{equation*}
\end{lemma}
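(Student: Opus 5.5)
The plan is to read the bound off Lemma \ref{biglemma}, after first normalizing the dominating set. Pick a dominating set $S$ of $F_2(K_{3,n})$. By Lemmas \ref{pr<2} and \ref{st<3}, I may assume without loss that $p\leq 1$ or $r\leq 1$, and also that $s,t\leq 2$. One caveat: the replacement step in Lemma \ref{st<3} should be checked not to destroy the condition from Lemma \ref{pr<2}. If it does, I would instead run the argument below using only the inequalities, which hold for every dominating set. The structure is a case split on $p$, using (\ref{bigineq4}) when $p\leq 1$ and (\ref{bigineq2}) when $p\geq 2$ (so that $r\leq 1$).

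\emph{Case $p=0$.} By (\ref{bigineq4}) with $m=3$,
\begin{equation*}
|S|\geq \left\lfloor \frac{s}{2}\right\rfloor+\left\lceil\frac{3-s}{2}\right\rceil+n-\left\lceil\frac{t}{2}\right\rceil+\left\lceil\frac{n-r-t}{2}\right\rceil .
\end{equation*}
For $s\in\{0,1,2\}$ the first two terms sum to at least $1$. Also $\lceil t/2\rceil\leq 1$ because $t\leq 2$, and the last term is nonnegative. Hence $|S|\geq n$.

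\emph{Case $p=1$.} The same inequality has $\lceil (2-s)/2\rceil$ in place of $\lceil (3-s)/2\rceil$. Now $\lfloor s/2\rfloor+\lceil (2-s)/2\rceil=1$ for each $s\in\{0,1,2\}$, so again $|S|\geq n+1-1=n$.

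\emph{Case $p\geq 2$.} Then $r\leq 1$. Note $p\leq 3-s$, so this forces $s\leq 1$, although this is not actually needed. I use (\ref{bigineq2}) and drop the nonnegative $C_2$-terms.
\begin{itemize}
\item If $r=0$, then $|S|\geq n-t+\lfloor t/2\rfloor+\lceil (n-t)/2\rceil\geq n-1+\lceil (n-2)/2\rceil\geq n$, since $n\geq 3$.
\item If $r=1$, then $|S|\geq n+(p-1)-t+\lfloor t/2\rfloor+\lceil (n-1-t)/2\rceil\geq n+1-1+0=n$, since $p-1\geq 1$ and $t-\lfloor t/2\rfloor\leq 1$.
\end{itemize}

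The step needing the most care is the bookkeeping of the reductions, namely that one may simultaneously assume ($p\leq1$ or $r\leq1$) and $s,t\leq 2$. The arithmetic itself is routine once the cases are fixed. If the simultaneous reduction is problematic, a fallback is available. When $t\geq 3$, the argument of Lemma \ref{st<3} shows that $S$ contains at least $\binom{t}{2}$ vertices of $\mathcal{C}_0$ among the uncovered $b_j$'s, so $|S\cap\mathcal{C}_0|$ alone compensates for the loss in the $\mathcal{C}_1$ count $n-t$. Checking this directly would handle large $t$ without the normalization.
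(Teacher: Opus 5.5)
Your proposal is correct and is essentially the paper's proof: it uses the same split into $p=0$, $p=1$, and $p>1$ with $r\in\{0,1\}$, and reads the bound off (\ref{bigineq4}) and (\ref{bigineq2}) in the same way (at $r=0$ the two inequalities coincide). Your worry about combining the normalizations is settled by applying Lemma \ref{st<3} first and then Lemma \ref{pr<2}: the latter's replacement keeps $\{a_1,b_1\}$ and $\{a_2,b_2\}$ and so leaves $s,t$ unchanged. Your fallback also works independently, because when $t\geq 3$ you get $|S|\geq (n-t)+\binom{t}{2}\geq n$ for any dominating set.
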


\begin{proof}
    Pick a dominating set $S$. In view of Lemma \ref{pr<2}, we consider the following cases: (i) $p=0$; (ii) $p=1$; (iii) $p>1$ and $r=0$; (iv) $p>1$ and $r=1$. 

    For the case $p=0$, the proof is similar to that of Lemma \ref{evenlower} when $m\geq 4$, hence omitted.

    Assume that $p=1$. By (\ref{bigineq4}),
    \begin{equation*}
        \vert S\vert\geq\left\lfloor \frac{s}{2}\right\rfloor+\left\lceil\frac{2-s}{2}\right\rceil+n-t+\left\lfloor\frac{t}{2}\right\rfloor+\left\lceil\frac{n-r-t}{2}\right\rceil\geq n.
    \end{equation*}

    For the case $p>1$ and $r=0$, the proof is similar to that of Lemma \ref{evenlower} when $4\leq m\leq n-1$, hence omitted.

    For the case $p>1$ and $r=1$, the proof is similar to that of Lemma \ref{evenlower} when $m\geq 4$, hence omitted.
\end{proof}

Next we consider the case (C3).

\begin{lemma}
    For odd $m\geq 5$, we have
    \begin{equation*}
        \gamma(F_2(K_{m,m+1}))\geq \frac{3m-1}{2}.
    \end{equation*}
\end{lemma}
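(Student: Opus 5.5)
The plan is to follow the structure of the proof of Lemma \ref{evenlower} for $m\geq 4$. We use the inequalities of Lemma \ref{biglemma}, now with $m$ odd and $n=m+1$ even. The target is $\frac{3m-1}{2}=\frac{m-1}{2}+n-1$.

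Pick a dominating set $S$. By Lemmas \ref{pr<2} and \ref{st<3} we may assume $s,t\leq 2$ and that one of the following holds: (i) $p=0$; (ii) $p=1$; (iii) $p>1$ and $r=0$; (iv) $p>1$ and $r=1$. We bound $|S|$ from below in each case.

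Throughout, the $t$-dependent part is handled by one observation. For $t\leq 2$,
\begin{equation*}
n-t+\left\lfloor \frac{t}{2}\right\rfloor\geq n-1,
\end{equation*}
and every ceiling term is nonnegative.

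Case (i), $p=0$. Use (\ref{bigineq4}). Since $m$ is odd, checking $s=0,1,2$ shows
\begin{equation*}
\left\lfloor \frac{s}{2}\right\rfloor+\left\lceil \frac{m-s}{2}\right\rceil\geq \frac{m-1}{2}.
\end{equation*}
Together with the $t$-bound this gives $|S|\geq \frac{m-1}{2}+n-1$.

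Case (ii), $p=1$. Use (\ref{bigineq4}) again. Now $m-1$ is even, so
\begin{equation*}
\left\lfloor \frac{s}{2}\right\rfloor+\left\lceil \frac{m-1-s}{2}\right\rceil\geq \frac{m-1}{2}
\end{equation*}
for every $s\in\{0,1,2\}$. This gives the bound directly. Unlike the even case, no equality-case analysis via Lemma \ref{important} is needed.

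Case (iii), $p>1$ and $r=0$. This is the step I expect to be the main obstacle, and it is where the hypothesis $n=m+1$ must enter. The $A$-side terms $\lfloor s/2\rfloor+\lceil (m-p-s)/2\rceil$ may be as small as $0$, so the saving has to come from the $\mathcal{C}_0$ term $\lceil (n-t)/2\rceil$. In (\ref{bigineq4}) I would bound
\begin{equation*}
n-t+\left\lfloor \frac{t}{2}\right\rfloor+\left\lceil \frac{n-t}{2}\right\rceil\geq n-1+\frac{n-2}{2},
\end{equation*}
using $t\leq 2$. Since $n-2=m-1$, this is exactly $n-1+\frac{m-1}{2}$. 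I would double-check the parity of $\lceil (n-t)/2\rceil$ for $t=1$: there $n-1$ is odd, which only helps.

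Case (iv), $p>1$ and $r=1$. Use (\ref{bigineq2}), where $r(p-1)=p-1$. Combine it with the $A$-side ceiling:
\begin{equation*}
\left\lceil \frac{m-p-s}{2}\right\rceil+p-1\geq \frac{m-s}{2}+\frac{p}{2}-1\geq \frac{m-s}{2}.
\end{equation*}
Adding $\lfloor s/2\rfloor$ gives at least $\frac{m-1}{2}$ after taking integer parts. The remaining terms $n-t+\lfloor t/2\rfloor+\lceil (n-1-t)/2\rceil$ contribute at least $n-1$, so the bound follows.

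Combining the four cases yields $\gamma(F_2(K_{m,m+1}))\geq \frac{3m-1}{2}$. With Lemma \ref{c3upper}, this settles case (C3) of Theorem \ref{main1}.
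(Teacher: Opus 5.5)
Your proposal is correct and is essentially the paper's proof. It uses the same four-way split from Lemma \ref{pr<2} (with $s,t\le 2$ from Lemma \ref{st<3}), bounds cases (i)--(iii) by (\ref{bigineq4}) and case (iv) by (\ref{bigineq2}), and the hypothesis $n=m+1$ matters only in case (iii), through $\lceil (n-2)/2\rceil=\frac{m-1}{2}$; the paper just calls each case ``similar'' to Lemmas \ref{evenlower} and \ref{dom3nlower}, and your parity checks are the computations those references omit.
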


\begin{proof}
    Pick a dominating set $S$. In view of Lemma \ref{pr<2}, we consider the following cases: (i) $p=0$; (ii) $p=1$; (iii) $p>1$ and $r=0$; (iv) $p>1$ and $r=1$. 

    For the case $p=0$, the proof is similar to that of Lemma \ref{evenlower} when $m\geq 4$, hence omitted.

    For the case $p=1$, the proof is similar to that of Lemma \ref{dom3nlower}, hence omitted.

    For the case $p>1$ and $r=0$, the proof is similar to that of Lemma \ref{evenlower} when $4\leq m\leq n-1$, hence omitted.

    For the case $p>1$ and $r=1$, the proof is similar to that of Lemma \ref{evenlower} when $m\geq 4$, hence omitted.
\end{proof}

Next we consider the case (C4).

\begin{lemma}
\label{5mnlower}
    For odd $m\geq 5$ and a positive integer $n\geq m$ such that $n\neq m+1$, we have
    \begin{equation*}
        \gamma(F_2(K_{m,n}))\geq \frac{m-1}{2}+n.
    \end{equation*}
\end{lemma}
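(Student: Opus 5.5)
We follow the template of Lemma \ref{evenlower}. Pick a dominating set $S$. By Lemmas \ref{pr<2} and \ref{st<3} we may assume $s,t\leq 2$ and that $p\leq 1$ or $r\leq 1$. We then split into the four cases (i) $p=0$; (ii) $p=1$; (iii) $p>1$ and $r=0$; (iv) $p>1$ and $r=1$. In each case we apply the appropriate inequality of Lemma \ref{biglemma} and evaluate the floors and ceilings using that $m$ is odd. The target is $\frac{m-1}{2}+n$, which is one more than the bound in (C2)/(C3).

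In case (i) we use (\ref{bigineq4}): the term $\lfloor s/2\rfloor+\lceil (m-s)/2\rceil$ is at least $\frac{m+1}{2}$, since $m$ odd forces a ceiling to round up when $s$ is even, and when $s=1$ it equals $\frac{m-1}{2}$. Combining this with $n-t+\lfloor t/2\rfloor+\lceil (n-r-t)/2\rceil$, we list the finitely many $(r,s,t)$ with $s,t\le 2$ where the bound drops to $\frac{m-1}{2}+n-1$ or below. As in the $p=1$ step of Lemma \ref{evenlower}, we then rule these out directly via Lemma \ref{important}: a large $r$ together with an index $i$ having $\sigma_i=\pi_i=0$ yields a pair $(i,j)$ violating it. Case (ii) is analogous. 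Here $\lceil (m-1-s)/2\rceil$ with $m-1$ even gives $\frac{m-1}{2}$ for $s=0$, so we need the $C_0$/$C_1$ part to contribute $n$. We expect equality cases, such as $t=0$ with $r$ large, and we again eliminate them by exhibiting an uncovered vertex $\{a_i,b_j\}$, or a vertex $\{a_i,a_{i'}\}$ with $\sigma_i=\sigma_{i'}=0$ not dominated.

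In cases (iii) and (iv), inequalities (\ref{bigineq2}) and (\ref{bigineq4}) give slack when $p\ge 2$. In case (iv) the $C_1$ term $n+(p-1)-t$ already gains $p-1\geq 1$. In case (iii), with $r=0$, we have $\lceil (n-t)/2\rceil$ together with $n-t+\lfloor t/2\rfloor$, and we must check the subcases $n=m$ and $n\ge m+2$ separately. The hypothesis $n\ne m+1$ should enter exactly here. When $n=m+1$, the configuration of (\ref{dommm+1}) ($r=0$, $s=1$, $t=0$, with $p$ large but all $b$'s paired) attains one less. So for $n=m$ we use (\ref{bigineq3}), and the parities of $m$ and $n$ coincide, which makes the ceilings round favourably. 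For $n\ge m+2$ we use (\ref{bigineq4}) and exploit $n-m\ge 2$.

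The main obstacle is the equality-case analysis in cases (i)–(iii). There the counting bounds of Lemma \ref{biglemma} fall exactly one short, and a structural argument is needed. The first tool to try is Lemma \ref{important} applied to an index with $\sigma_i=\pi_i=0$. If that fails, we would exploit the fact that the counts in Lemmas \ref{C2lem} and \ref{C1lem} cannot both be tight. Tightness of Lemma \ref{C2lem} with $m-p-s$ odd forces some $a_i$ to lie in two $C_2$-elements, or in an extra $C_2$-element, and that costs the missing unit.
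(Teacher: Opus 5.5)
Your four-case structure and your tools (Lemma \ref{biglemma}, then Lemma \ref{important} for tight cases) are the paper's, but the step you treat as automatic in case (iii) fails. Take $n=m$, $r=0$ and $(s,t,p)=(1,1,m-1)$.
\begin{itemize}
\item (\ref{bigineq3}) evaluates to $0+0+(m-1)+0+\lceil\frac{m-1}{2}\rceil=\frac{3m-3}{2}$.
\item (\ref{bigineq4}) and (\ref{bigineq2}) give the same value, and (\ref{bigineq1}) gives less.
\end{itemize}
So Lemma \ref{biglemma} falls one short of $\frac{m-1}{2}+n=\frac{3m-1}{2}$. The equal parity of $m$ and $n$ is exactly what hurts: for $t=1$, $\lceil (n-t)/2\rceil$ does not round up. This is a genuine tight case, and neither of your fallbacks handles it. Lemma \ref{important} produces no violated pair, since $r=0$ gives $\rho_j>0$ for every $j$ with $\tau_j>0$. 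The missing unit also lies in the $\mathcal{C}_0$ count of Lemma \ref{C0lem}, not in the $\mathcal{C}_1$/$\mathcal{C}_2$ counts.

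The missing argument runs as follows. Let $\sigma_{i_0}=0$ and $\tau_{j_0}=0$. Since $p=m-1$, every $a_i$ with $i\neq i_0$ has $\pi_i=0$. Every $\mathcal{C}_2$-element contains such an $a_i$, so $S\cap\mathcal{C}_2=\emptyset$ and hence $\pi_{i_0}=0$. Lemma \ref{important} at $(i_0,j_0)$ then forces $\rho_{j_0}>0$. So $S\cap\mathcal{C}_0$ must cover all of $B$, giving $|S\cap\mathcal{C}_0|\ge\lceil n/2\rceil=\frac{m+1}{2}$. Together with $|S\cap\mathcal{C}_1|\ge m-1$ this yields $\frac{3m-1}{2}$.

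The paper's proof does no better here. It refers case (iii) to the $m\le n-1$ computation of Lemma \ref{evenlower}, which has the same defect when $n=m$. You were right to split off $n=m$, but the computation you announce for it is wrong.

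Case (ii) also needs repair. For every $s\le 2$ you have $\lfloor s/2\rfloor+\lceil\frac{m-1-s}{2}\rceil=\frac{m-1}{2}$, so $t=0$ is never tight. The tight cases are $(r,t)=(n-1,1)$ and $(n-2,2)$.
\begin{itemize}
\item The first dies by Lemma \ref{important}, applied to the pair formed by the index with $\pi_i=0$ and the index with $\tau_j=0$.
\item The second has no directly violated pair or undominated vertex. Lemma \ref{important} only forces $\sigma_i\ge n-2$ for the index $i$ with $\pi_i=0$. The extra unit comes from the paper's count $|S\cap\mathcal{C}_1|\ge (n-2)+(m-s-1)\ge n$, which replaces the bound $n-t=n-2$.
\end{itemize}
Your sketch does not contain this counting step, and you need to supply it.
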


\begin{proof}
    Pick a dominating set $S$. In view of Lemma \ref{pr<2}, we consider the following cases: (i) $p=0$; (ii) $p=1$; (iii) $p>1$ and $r=0$; (iv) $p>1$ and $r=1$.

    Assume that $p=0$. By (\ref{bigineq4}),
    \begin{equation}
    \label{p=0ineq}
        \vert S\vert \geq\left\lfloor \frac{s}{2}\right\rfloor+\left\lceil\frac{m-s}{2}\right\rceil+n-t+\left\lfloor\frac{t}{2}\right\rfloor+\left\lceil\frac{n-r-t}{2}\right\rceil\geq \frac{m-1}{2}+n-1.
    \end{equation}
    It is easy to verify that the equality cases for the last inequality of (\ref{p=0ineq}) are $(r,s,t)=(n-1,1,1), (n-2,1,2)$. Pick and assume an equality case. Since $s=1$, assume without loss that $\sigma_1=0$. Since $p=0$, we have $\pi_1=0$. Since $r\geq n-2\geq 3$, there exists $j$ ($1\leq j\leq n$) such that the ordered pair $(1,j)$ violates Lemma \ref{important}. Hence, the last inequality of (\ref{p=0ineq}) is strict, and the result follows. 

    Assume that $p=1$. Assume without loss that $\pi_1=0$. By (\ref{bigineq4}),
    \begin{equation}
    \label{p=1ineq2}
        \vert S\vert\geq\left\lfloor \frac{s}{2}\right\rfloor+\left\lceil\frac{m-1-s}{2}\right\rceil+n-t+\left\lfloor\frac{t}{2}\right\rfloor+\left\lceil\frac{n-r-t}{2}\right\rceil \geq \frac{m-1}{2}+n-1.
    \end{equation}
    It is easy to verify that the equality cases for the last inequality of (\ref{p=1ineq2}) are $(r,t)=(n-1,1), (n-2,2)$. Assume that $(r,t)=(n-1,1)$. Assume without loss that $\tau_1=0$. Then the ordered pair $(1,1)$ violates Lemma \ref{important}. Now assume that $(r,t)=(n-2,2)$. In this case, observe that $\vert S\cap \mathcal{C}_1\vert=n-2$. Assume without loss that $\rho_j=0$ for $1\leq j\leq n-2$. By Lemma \ref{important}, we have $\sigma_1\geq n-2$. Hence,
    \begin{equation*}
        \vert S\cap \mathcal{C}_1\vert\geq \sum_{i=1}^{m}\sigma_i\geq n-2+(m-s-1)\geq n,
    \end{equation*}
    which is a contradiction. Hence, the last inequality of (\ref{p=1ineq2}) is strict, and the result follows.

    For the case $p>1$ and $r=0$, the proof is similar to that of Lemma \ref{evenlower} when $4\leq m\leq n-1$, hence omitted.

    For the case $p>1$ and $r=1$, the proof is similar to that of Lemma \ref{evenlower} when $m\geq 4$, hence omitted.
\end{proof}

We now prove Theorem \ref{main1}.

\begin{proof}[Proof of Theorem \ref{main1}]
    Combine Lemmas \ref{evenupper}--\ref{5mn}, \ref{evenlower}--\ref{5mnlower}.
\end{proof}

\section{The domination number of $F_k(K_{m,n})$ for $k\geq 3$}
We continue to work with $G=K_{m,n}$, where $2\leq m\leq n$. In this section we prove Theorem \ref{main2}. 

Our first goal is to prove the left inequality of (\ref{intro2}).

\begin{lemma}
    Recall the notation $\Delta$ from Lemma \ref{DeltaG}. For positive integers $m,n,k$ such that $2\leq m\leq n$ and $m+n\geq 2k\geq 6$, we have
    \begin{equation}
    \label{Deltaineq}
        \Delta(F_k(K_{m,n}))\leq mn.
    \end{equation}
\end{lemma}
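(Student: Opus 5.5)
We use the degree formula (\ref{degchi}) from Lemma \ref{threelemmas}(iii). Every vertex $\chi\in\mathcal{X}$ lies in some $\mathcal{C}_\ell$ with $0\le \ell\le \min(m,k)$, and its degree is
\begin{equation*}
f(\ell)=\ell(n-k+\ell)+(m-\ell)(k-\ell).
\end{equation*}
So it suffices to show $f(\ell)\le mn$ for every admissible $\ell$. Note that if $\mathcal{C}_{\ell}$ is empty, for instance when $k-\ell>n$, then no vertex realizes $f(\ell)$; but bounding $f(\ell)$ over the whole range $0\le\ell\le\min(m,k)$ only makes the claim stronger.

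First expand $f$:
\begin{equation*}
f(\ell)=\ell n-\ell k+\ell^2+mk-m\ell-k\ell+\ell^2=2\ell^2-(m-n+2k)\ell+mk.
\end{equation*}
This is a convex quadratic in $\ell$, so on the interval $[0,\min(m,k)]$ its maximum is attained at an endpoint. We therefore only need to check $\ell=0$ and $\ell=\min(m,k)$.

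At $\ell=0$ we get $f(0)=mk$. Since $2k\le m+n\le 2n$, we have $k\le n$, and hence $mk\le mn$.

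At $\ell=\min(m,k)$ there are two cases.
\begin{itemize}
\item If $k\le m$, then $\ell=k$ and $f(k)=k(n-k+k)+0=kn\le mn$.
\item If $k>m$, then $\ell=m$ and $f(m)=m(n-k+m)$. Since $k>m$, we have $n-k+m<n$, so $f(m)<mn$.
\end{itemize}
In every case $f(\ell)\le mn$, which gives (\ref{Deltaineq}).

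Nothing here is a real obstacle. The only point needing care is the justification via convexity: the leading coefficient of $f$ is $2>0$, so the maximum over the interval sits at an endpoint. One could instead compare $f(\ell)$ with $f(\ell\pm1)$ directly, but convexity is cleaner. The hypothesis $2k\ge 6$ plays no role in the bound; only $m+n\ge 2k$, used to get $k\le n$, and $m\le n$ matter.
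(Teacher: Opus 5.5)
Your proof is correct and follows the paper's argument exactly: view the degree formula (\ref{degchi}) as a convex quadratic in $\ell$, so the maximum is at $\ell=0$ or $\ell=\min(m,k)$, and check $mk\le mn$, $kn\le mn$ and $m(n-k+m)<mn$ at the endpoints. The paper calls this endpoint check ``routine,'' and you have carried it out explicitly and correctly.
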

\begin{proof}
    Consider the expression in (\ref{degchi}) as a polynomial in $\ell$; denote this polynomial by $p(\ell)$. Observe that $p(\ell)$ is quadratic with a positive leading coefficient. Hence, for $0\leq \ell\leq \min(m,k)$, the maximum of $p(\ell)$ occurs either at $\ell=0$ or $\ell=\min(m,k)$. It is now routine to verify that $p(\ell)\leq mn$ in either case. The result follows.
\end{proof}

\begin{lemma}
    For positive integers $m,n,k$ such that $2\leq m\leq n$ and $m+n\geq 2k\geq 6$, we have
    \begin{equation}
    \label{gammalower}
        \frac{1}{1+mn} \binom{m+n}{k} \leq \gamma(F_k(K_{m,n})).
    \end{equation}
\end{lemma}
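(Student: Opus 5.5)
This is a direct combination of Lemma \ref{DeltaG} with the degree bound (\ref{Deltaineq}). We apply Lemma \ref{DeltaG} to the graph $F_k(K_{m,n})$, whose vertex set $\mathcal{X}$ has cardinality $\binom{m+n}{k}$ by (\ref{bipartitecalX}). This gives
\begin{equation*}
\frac{1}{1+\Delta(F_k(K_{m,n}))}\binom{m+n}{k}\leq \gamma(F_k(K_{m,n})).
\end{equation*}

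Next we use the preceding lemma, which is valid under exactly the hypotheses $2\leq m\leq n$ and $m+n\geq 2k\geq 6$. It gives $\Delta(F_k(K_{m,n}))\leq mn$, so $1+\Delta(F_k(K_{m,n}))\leq 1+mn$. Since both quantities are positive, taking reciprocals reverses the inequality:
\begin{equation*}
\frac{1}{1+mn}\binom{m+n}{k}\leq \frac{1}{1+\Delta(F_k(K_{m,n}))}\binom{m+n}{k}.
\end{equation*}
Chaining this with the first display yields (\ref{gammalower}).

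There is no real obstacle here: all the content lies in the degree bound (\ref{Deltaineq}), which we take as given. The only thing to check is that the hypotheses of the statement match those of that lemma, which they do verbatim. One could also note that, since $\gamma$ is an integer, the bound can be strengthened to its ceiling, but this is not needed.
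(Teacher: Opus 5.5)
Your proof is correct and follows the paper's argument exactly: apply Lemma \ref{DeltaG} to $F_k(K_{m,n})$ with $\vert\mathcal{X}\vert=\binom{m+n}{k}$, then substitute the degree bound $\Delta(F_k(K_{m,n}))\leq mn$ from (\ref{Deltaineq}). The hypotheses of the two lemmas match, as you note.
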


\begin{proof}
    Combine (\ref{deltag}), (\ref{vertcalX}) to obtain
    \begin{equation}
    \label{Deltakineq}
        \frac{1}{1+\Delta(F_k(K_{m,n}))}\binom{m+n}{k}\leq \gamma(F_k(K_{m,n})).
    \end{equation}
    Combine (\ref{Deltaineq}), (\ref{Deltakineq}) to obtain the result.
\end{proof}

Our next goal is to prove the right inequality of (\ref{intro2}).

\begin{lemma}
    For positive integers $m,n,k$ such that $2\leq m\leq n$ and $m+n\geq 2k\geq 6$ we have
    \begin{equation}
    \label{alphaupper}
        \alpha (F_k(K_{m,n}))\leq\frac{n}{m+n-k+1}\binom{m+n}{k}.
    \end{equation}
\end{lemma}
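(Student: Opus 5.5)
This bound should follow directly by combining the lemma from \cite[Theorem~2.1]{Abdel}, i.e.\ (\ref{alphaG}), with the value $\alpha(K_{m,n})=n$ from (\ref{alphaK}). The plan is to apply (\ref{alphaG}) with $G=K_{m,n}$ and $\vert X\vert=m+n$. The hypothesis $k\leq \vert X\vert/2$ required there is exactly our assumption $m+n\geq 2k$. This yields
\begin{equation*}
\alpha(F_k(K_{m,n}))\leq \frac{1}{k}\binom{m+n}{k-1}\,n .
\end{equation*}

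The remaining step is a binomial identity converting $\binom{m+n}{k-1}$ into a multiple of $\binom{m+n}{k}$. Writing $N=m+n$, we use
\begin{equation*}
\binom{N}{k-1}=\frac{k}{N-k+1}\binom{N}{k},
\end{equation*}
which follows from expanding both sides in factorials: $\binom{N}{k}=\binom{N}{k-1}\frac{N-k+1}{k}$. Substituting, the factor $\frac{1}{k}$ cancels the $k$, and we get
\begin{equation*}
\alpha(F_k(K_{m,n}))\leq \frac{n}{m+n-k+1}\binom{m+n}{k},
\end{equation*}
which is (\ref{alphaupper}).

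There is no real obstacle here. The only points to check are that the hypothesis of (\ref{alphaG}) is met and that $N-k+1>0$, which is clear since $k\leq N/2$. The assumption $k\geq 3$ is not needed for this inequality; it is carried only for consistency with Theorem \ref{main2}. Afterward, Lemma \ref{gammaalpha} combined with (\ref{alphaupper}) gives the right inequality of (\ref{intro2}).
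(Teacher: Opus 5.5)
Your proposal is correct and is exactly the paper's argument: apply the cited bound $\alpha(F_k(G))\leq\frac{1}{k}\binom{\vert X\vert}{k-1}\alpha(G)$ with $\vert X\vert=m+n$ (its hypothesis holds because $m+n\geq 2k$) and $\alpha(K_{m,n})=n$, then simplify using $\binom{m+n}{k-1}=\frac{k}{m+n-k+1}\binom{m+n}{k}$. Your side remark is also accurate: the assumption $k\geq 3$ plays no role in this inequality.
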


\begin{proof}
    Recall that $\vert X\vert=m+n$. Combine (\ref{alphaG}), (\ref{alphaK}) and simplify to obtain the result.
\end{proof}

\begin{corollary}
    For positive integers $m,n,k$ such that $2\leq m\leq n$ and $m+n\geq 2k\geq 6$ we have
    \begin{equation}
    \label{gammaupper}
        \gamma(F_k(K_{m,n}))\leq\frac{n}{m+n-k+1}\binom{m+n}{k}.
    \end{equation}
\end{corollary}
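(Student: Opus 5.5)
This corollary follows by chaining two inequalities that are already available. Lemma \ref{gammaalpha}, applied to the graph $G=F_k(K_{m,n})$, gives $\gamma(F_k(K_{m,n}))\leq \alpha(F_k(K_{m,n}))$. The preceding lemma, inequality (\ref{alphaupper}), bounds the independence number by $\frac{n}{m+n-k+1}\binom{m+n}{k}$ under exactly the present hypotheses $2\leq m\leq n$ and $m+n\geq 2k\geq 6$. Concatenating the two yields (\ref{gammaupper}).

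Since (\ref{alphaupper}) is stated with a terse proof, I would also check its simplification explicitly. The hypothesis $m+n\geq 2k$ gives $k\leq \vert X\vert/2$, so (\ref{alphaG}) applies with $\vert X\vert=m+n$ and $\alpha(K_{m,n})=n$ from (\ref{alphaK}). This gives
\begin{equation*}
\alpha(F_k(K_{m,n}))\leq \frac{n}{k}\binom{m+n}{k-1}.
\end{equation*}
The identity $\binom{N}{k-1}=\frac{k}{N-k+1}\binom{N}{k}$ with $N=m+n$ then converts this to $\frac{n}{m+n-k+1}\binom{m+n}{k}$.

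No step is a genuine obstacle. The only point requiring care is confirming that the hypothesis $k\leq \vert X\vert/2$ needed for (\ref{alphaG}) is implied by $m+n\geq 2k$, which it is directly. Together with (\ref{gammalower}), this completes the proof of Theorem \ref{main2}.
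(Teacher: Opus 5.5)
Your proposal is correct and is exactly the paper's proof, which simply combines Lemma~\ref{gammaalpha} with (\ref{alphaupper}). Your explicit check of the identity $\binom{m+n}{k-1}=\frac{k}{m+n-k+1}\binom{m+n}{k}$ and of the hypothesis $k\leq \vert X\vert/2$ fills in the ``simplify'' step that the paper leaves implicit in the proof of (\ref{alphaupper}).
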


\begin{proof}
    Combine (\ref{alphaupper}) and Lemma \ref{gammaalpha}.
\end{proof}

We now prove Theorem \ref{main2}.
\begin{proof}[Proof of Theorem \ref{main2}]
    Combine (\ref{gammalower}) and (\ref{gammaupper}) to obtain the result.
\end{proof}

\section{Appendix}
In this section, we illustrate the equitable partition described in Remark \ref{remark}.

\begin{figure}[h!]
    \centering
\begin{tikzpicture}[node distance= 20mm, minimum width=8cm, thick, main/.style = {draw, rectangle}]

\node[main, align=center] (1) {\textbf{Class $C_0$}:  $\binom{n}{k}$ vertices};

\node[main, align=center] (2) [below= of 1] {\textbf{Class $C_1$ }: $m\binom{n}{k-1}$ vertices};

\draw[->] ([xshift=35mm] 1.south west) to node[left, xshift=20mm] {$\sim km$} ([xshift=35mm] 2.north west);

\draw[->] ([xshift=-35mm] 2.north east) to node[right, xshift=-20mm] {$\sim n-k+1$}([xshift=-35mm] 1.south east);

\node[main, draw=none] (3) [below =5mm of 2] {$\vdots$};

\draw[->, dotted]  (2) to (3);
\draw[->, dotted]  (3) to (2);

\node[main, align=center] (4) [below =5mm of 3] {\textbf{Class $C_{\ell-1}$}: \ $\binom{n}{k-\ell+1}\binom{m}{\ell - 1}$ vertices};

\draw[->, dotted]  (3) to (4);
\draw[->, dotted]  (4) to (3);

\node[main, align=center] (5) [below= of 4] {\textbf{Class $C_\ell$}: $\binom{n}{k-\ell}\binom{m}{\ell}$ vertices};

\draw[->, align=center] ([xshift=35mm] 4.south west) to node[left, xshift=20mm] {$\sim (m-\ell+1)$ \\ $\cdot(k-\ell+1)$} ([xshift=35mm] 5.north west);

\draw[->, align=center] ([xshift=-35mm] 5.north east) to node[right, xshift=-20mm] {$\sim \ell(n-k+\ell)$} ([xshift=-35mm] 4.south east);

\node[main, align=center] (6) [below= of 5] {\textbf{Class $C_{\ell+1}$}: $\binom{n}{k-\ell-1}\binom{m}{\ell+1}$ vertices};

\node[main, draw=none] (7) [below =5mm of 6] {$\vdots$};

\draw[->, dotted]  (6) to (7);
\draw[->, dotted]  (7) to (6);

\draw[->, align=center] ([xshift=35mm] 5.south west) to node[left, xshift=20mm] {$\sim (m-\ell)(k-\ell)$} ([xshift=35mm] 6.north west);

\draw[->, align=center] ([xshift=-35mm] 6.north east) to node[right, xshift=-22mm] {$\sim (\ell+1)$\\$\cdot (n-k +\ell+1)$} ([xshift=-35mm] 5.south east);

\node[main, align=center] (8) [below=5mm of 7] {\textbf{Class $C_{\min (m,k)-1}$}: $\binom{n}{k-\min(m,k)+1}\binom{m}{\min(m,k)-1}$ vertices};

\draw[->, dotted]  (7) to (8);
\draw[->, dotted]  (8) to (7);

\node[main, align=center] (9) [below = of 8] {\textbf{Class $C_{\min(m,k)}$}: $\binom{n}{k-\min(m,k)}\binom{m}{\min(m,k)}$ vertices};

\draw[->, align=center] ([xshift=38.5mm] 8.south west) to node[left, xshift=22mm] {$\sim $ \\ $(m-\min(m,k)+1)$ \\$\cdot(k-\min(m,k)+1)$} ([xshift=35mm] 9.north west);

\draw[->, align=center] ([xshift=-35mm] 9.north east) to node[right, xshift=-22mm] {$\sim \min(m,k)$\\$\cdot(n-\min(m,k))$} ([xshift=-38.5mm] 8.south east);
\end{tikzpicture}
\caption{For $0<\ell\leq \min(m,k)$, the arrow from $\mathcal{C}_{\ell}$ to $\mathcal{C}_{\ell-1}$ (resp. $\mathcal{C}_{\ell-1}$ to $\mathcal{C}_{\ell}$) and its corresponding number represent the number of vertices in $\mathcal{C}_{\ell-1}$ that are adjacent to each vertex in $\mathcal{C}_{\ell}$ (resp. number of vertices in $\mathcal{C}_{\ell}$ that are adjacent to each vertex in $\mathcal{C}_{\ell-1}$).}
    \label{class}
\end{figure}








\section*{Acknowledgements}
This project was a part of the SMALL 2026 REU program, which was funded by the U.S. National Science Foundation (Grant DMS2241623). The authors would also like to thank Williams College for all the support.

\section*{Declarations}
\subsection*{Data Availability Statement}
No datasets were generated or analyzed during the current study.

\begin{multicols}{2}
Nathaniel Chinman\\
Brown University\\
Providence, RI 02912 USA\\
email: nathaniel\_chinman@brown.edu
\columnbreak 

Antonio Cortijo\\
University of California, Berkeley\\
2500 Bancroft Way\\
Berkeley, CA 94704 USA\\
email: acortijorodgers@berkeley.edu
\end{multicols}

\begin{multicols}{2}
Beatrice Pedroni\\
Williams College \\
880 Main St \\
Williamstown, MA 01267-2680 USA \\
email: bp11@williams.edu
\columnbreak 

Ian Seong\\
Department of Mathematics\\
Williams College \\
18 Hoxsey St \\
Williamstown, MA 01267-2680 USA \\
email: is11@williams.edu
\end{multicols}

Lucy Stenger\\
Williams College \\
880 Main St \\
Williamstown, MA 01267-2680 USA \\
email: lhs3@williams.edu
\end{document}